\newtheorem{theorem}{Theorem}[section]
\newtheorem{proposition}[theorem]{Proposition}
\newtheorem{problem}[theorem]{Problem}
\newtheorem{corollary}[theorem]{Corollary}
\newtheorem{example}[theorem]{Example}
\newtheorem{definition}[theorem]{Definition}
\newtheorem{remark}[theorem]{Remark}
\newcommand{\N}{\mathbb N}
\newcommand{\R}{\mathbb R}
\author{Jacek Marchwicki}
\address{Institute of Mathematics, \L \'od\'z University of Technology,
W\'olcza\'nska 215, 93-005 \L \'od\'z, Poland}
\email {marchewajaclaw@gmail.com}
\title[Achievement sets and sum ranges with ideal supports]{Achievement sets and sum ranges with ideal supports}
\subjclass[2010]{Primary: 40A05 ; Secondary: 11K31} 
\keywords{achievement set, set of subsums, conditionally convergent series, absolutely convergent series, sum range, ideal}
\begin{document} 

\begin{abstract}

We introduce the notion of ideally supported achievement sets for a series of real numbers. We analize their complexity and topological properties. We compare the notion of ideal achievement sets with the notion of ideally supported sum range of real series, considered by Filipów and Szuca. We complete Filipów and Szuca characterization of ideal sum ranges, [R. Filipów, P. Szuca, Rearrangement of conditionally convergent series on a small set, J. Math. Anal. Appl. 362 (2010), no. 1, 64-71.], and we obtain some generalization of Riemann's Theorem.

%Considering the sets of subsums of series (or achievement sets) we generalize it with ideal support. We show that for absolutely convergent series of reals the characterization is much more complicated than that of clasical achievement sets. Although we are far from the full topological classification of such sets, we present many surprising examples and catch the ideas standing behind them in general theorems. For a conditionally convergent series we consider the sets of sums of all its  convergent rearrangement (or sum range). Filipów and Szuca considered a subset of sum range, obtained by taking only those rearrangements, whose support an element of an ideal. We complete their characterization and obtain some generalization of Riemann's Rearrangement Theorem.
\end{abstract}
\maketitle

\section{Introduction}
By the achievement set of a series $\sum_{n=1}^{\infty}x_n$ we mean the set $A(x_n)=\{\sum_{n\in A} x_n : A\subset\mathbb{N}\}$. 
By $SR(x_n)=\{\sum_{n=1}^{\infty}x_{\sigma(n)} : \sigma\in S_{\infty}\}$ we denote the set of all convergent rearrangements $\sum_{n=1}^{\infty}x_{\sigma(n)}$ of $\sum_{n=1}^{\infty}x_{n}$, that is the sum range of the series $\sum_{n=1}^{\infty}x_{n}$. Kakeya in \cite{Kakeya} proved that if a series $\sum_{n=1}^{\infty}x_n$ of reals is absolutely convergent and contains infinite many non-zero terms, then
\begin{itemize}
\item $A(x_n)$ is a compact perfect set;
\item $A(x_n)$ is homeomorphic to the Cantor set for quickly convergent series  $\sum_{n=1}^{\infty}x_n$, that is if $\vert x_n\vert>\sum_{k=n+1}^{\infty}\vert x_k\vert$ for every $n\in\mathbb{N}$;
\item  $A(x_n)$ is a finite sum of closed intervals, if $\vert x_n\vert\leq\sum_{k=n+1}^{\infty}\vert x_k\vert$ for almost all $n\in\mathbb{N}$.
\end{itemize}
The full topological characterization is due to Guthrie, Nymann and Saenz \cite{GN,NS1}, who showed that the achievement set of an absolutely summable sequence of reals is one of the following form: a finite set, a finite union of intervals, a  homeomorphic copy of the Cantor set or a Cantorval, which is a set homeomorphic to the union of the Cantor set and sets which are removed from the unit segment by even steps of the Cantor set construction. This characterization is not valid for series of complex numbers and for multidimensional series, see \cite{BG}. 

Theory of achievement sets for absolutely convergent series is equivalent to the that of ranges of finite purely atomic measures. If $\mu$ is finite and purely atomic on a set $X$, then there is a countable set $S=\{a_1,a_2,\ldots\}$ such that $\mu(S)=\sum_{n=1}^{\infty} \mu(\{a_n\})=\mu(X)$, where $a_n$ is an atom of $\mu$. Let $x_n=\mu(\{a_n\})$, then $\mu(A)=\sum_{n\in E}x_n$, where $E=\{n : a_n\in A\}$. Thus $range(\mu)=\{\mu(A): A \ \text{is a measurable subset of} \ X\}=\{\sum_{n\in E}x_n : E\subset\mathbb{N}\}=A(x_n)$.

During last decades many authors have defined ideal versions of important Analysis notions and proved many remarkable results. Since the convergence is a basic notion in Analysis, most of them deal with  ideal convergence of sequences \cite{BGW},\cite{KSW},\cite{S}. The following list of topics and related papers is far from being complete and it gives only a flavor of these matters: ideal convergence of sequences of functions \cite{BDK}; ideal convergence of series \cite{GO},\cite{LO}; ideal convergence in measure \cite{K},\cite{M}; ideal versions of combinatorial theorems \cite{FMRS}; ideal versions of the Riemann rearrangement theorem and the Levy-Steinitz theorem \cite{FS},\cite{K}; ideal version of the Banach principle \cite{HS}.

%In literature many authors have considered $I$-convergence of the sequence, that is $(x_n)$ is $I$-convergent to $x$ if for every $\varepsilon>0$ we have $\{n: \vert x_n-x\vert<\varepsilon\}\in I$. A series is $I$-convergent to $x$ if a sequence of its partial sums is $I$-convergent to $x$. In particular if we consider $I=Fin$, then $I$-convergent means standard notion of convergence. Furthermore convergence up to some ideals have their own names, for example $I_d$-convergence, where $I_d$ is the familly of all sets of asymptotic density $0$, is called statistical convergent.

We define the ideal achievement set in a natural way, namely $A_{I}(x_n)=\{\sum_{n\in A} x_n : A\in I\}$. This is a subset of $A(x_n)$. We study how properties of $A_{I}(x_n)=\{\sum_{n\in A} x_n : A\in I\}$ and its possible form depend on the  properties of a given ideal $I$ and a sequence $(x_n)$. Note that for a sequence $(x_n)\in \ell_1$ and an ideal $I\supseteq Fin$, $I\neq Fin$ we have $A(x_n)=A_{I}(y_n)$, provided $(y_n)\in \ell_1$ is defined as follows: $y_{b_{n}}=x_{n}$ for $n\in\mathbb{N}$ and $y_{k}=0$ for $k\notin B$, where $B=\{b_{n}\}_{n=1}^{\infty}$ is an infinite set from $I$. If we consider  $\ell_1^*=\{(x_n)\in \ell_{1} : x_n\neq 0 \ \text{for \ each} \ n\in\mathbb{N}\}$, then the theory of ideal achievement sets differs from the theory of standard achievement sets, in particular for a non-maximal ideal $I$ one can construct a sequence $(x_n)\in \ell_{1}^*$ for which the set $A_{I}(x_n)$ is open, see \ref{idealmaksymalnyizbiorotwarty}.

The paper is organized as follows: 
\\In Section 2 we give background definitions and facts, which we use in next sections. In Section 3 we consider conditionally convergent series and divergent series. We prove the generalization of the Riemann's Rearangement Theorem  and show that ideally supported sum range of any conditionally convergent series is one of the following: a point, a real line or a set, which contains a halfline and in some particular cases it is exactly a closed halfline. We also prove that for any conditionally convergent series $\sum_{n=1}^{\infty}x_n$ there exists an ideal $I$ such that  $A_{I}(x_n)=\mathbb{R}$ and the stronger condition, defined in Theorem \ref{filipowszuca} (ii) - Filipów and Szuca characterization of ideals for which thesis of Riemann's Theorem holds - is not satisfied. In Section 4 we study absolutely convergent series. We give many examples of ideally supported achievement sets with high Borel complexity. We also show that it can be a set, which is not measurable. Moreover, we prove that for any ideal $I\supsetneq Fin$ we may construct a series  $\sum_{n=1}^{\infty}x_n$ such that  $A_{I}(x_n)$ is equal to  $A(x_n)$ up to a point and if $I$ is not maximal, then $A_{I}(x_n)$ can be an open set. In Section 5 we show how we can modify $A_{I}(x_n)$ to remain symmetry of $A(x_n)$. Section 6 is dedicated to examples, which show inclusions between ideally supported achievement sets and in Section 7 we give some open problems.

\section{Background}
We use standard set-theoretic notation, \cite{Kechris}.
We say that $I\subset P(\mathbb{N})$ is an ideal if for every $A,B\in I$ we have $A\cup B\in I$ and for every $A\in I$ and every $B\subset A$ we have $B\in I$, moreover $\mathbb{N}\notin I$. By $Fin$ we denote the set $\{A\subset\mathbb{N} : \vert A\vert<\infty\}$ of all finite subsets of $\mathbb{N}$ which is clearly an ideal. In the sequel, we will consider ideals $I$, which contain $Fin$, symbolically $Fin\subset I$. Put $I_{(a_n)}=\{A\subset\mathbb{N} : \sum_{n\in A}a_n \ \text{converges} \}$,  where $\sum_{n=1}^{\infty}a_n$ is a given divergent series of positive terms; such family of sets forms a so-called summable ideal. We say that an ideal $I$ is dense if for every set $A$ with $\vert A\vert=\infty$ there exists $B\subset A$ such that $\vert B\vert =\infty$ and $B\in I$. An ideal $I$ is not maximal if there exists an ideal $J$ such that $I\subsetneq J$. Otherwise we say that $I$ is maximal. It is well known that $I$ is maximal if and only if for each $A\subset\mathbb{N}$ either $A\in I$ or $\mathbb{N}\setminus A\in I$. By $A+Fin$ $((k_n)+Fin)$ we denote the smallest ideal generated by $A$ and $Fin$ (by $\{k_n : n\in\mathbb{N}\}$ and $Fin$, respectively).

Let us consider the function $f: \{0,1\}^{\mathbb{N}}\rightarrow\mathbb{R}$ defined as $f(\chi_A)=\sum_{n\in A}x_n$, we will call it an associated function of the series $\sum_{n=1}^{\infty} x_n$.  We equip $\{0,1\}^{\mathbb{N}}$ with Tichonov's topology, that is a topology given by sub-base $\{\{0,1\}^{k-1}\times\{i\}\times\{0,1\}^{\mathbb{N}}, k\in\mathbb{N}, i\in\{0,1\}\}$. Identifying sets $A\subset\mathbb{N}$ with their characteristic functions $\chi_A$, we may consider on $P(\mathbb{N})$ the topology inherited from $\{0,1\}^{\mathbb{N}}$. Therefore we may consider topological properties of ideals $I\subset P(\mathbb{N})$. We say that $I$ is Borel ($F_{\sigma}$, $F_{\sigma\delta}$, of the Baire property, measurable) provided $\{\chi_A : A\in I\}$ is Borel ($F_{\sigma}$, etc. respectively) in $\{0,1\}^{\mathbb{N}}$.
On the real line we consider the natural topology. If $(x_n)\in \ell_1$, then $f$ is continuous. Moreover if $f$ is one-to-one (for example if the series $\sum_{n=1}^{\infty}x_n$ is quickly convergent), then $f$ is a homeomorphism between $\{0,1\}^{\mathbb{N}}$ and $A(x_n)$. Hence $A_I(x_n)=f(I)$ and $f^{-1}(A_I(x_n))=I$. Since homeomorphic pre-images of Borel $(\Sigma_{\alpha}^{0},\Pi_{\alpha}^{0})$ sets are Borel $(\Sigma_{\alpha}^{0},\Pi_{\alpha}^{0})$, then the descriptive complexity of $A$ and $f^{-1}(A)$ is equal provided $f$ is a homeomorphism.

Let $\mu_n(\{0\})=\mu_n(\{1\})=\frac{1}{2}$ for each $n\in\mathbb{N}$. We consider the product measure $\mu=\prod_{n=1}^{\infty} \mu_n$ on $\{0,1\}^{\mathbb{N}}$ and the function $f: \{0,1\}^{\mathbb{N}}\supset \chi_A\rightarrow \sum_{n\in A}\frac{1}{2^n}$.  By $\lambda$ we denote the Lebesgue measure on $[0,1]$. Then 
\begin{itemize}
\item If $X\subset\{0,1\}^{\mathbb{N}}$ is measurable, then $f(X)$ is measurable on $[0,1]$ and $\mu(X)=\lambda(f(X))$, i. e. $f$ preserves Lebesque measure; 
\item If $X$ is non-measurable, then $f(X)$ is also non-$\lambda$-measurable;
\item If $X$ is meager, then so is $f(X)$.
\end{itemize}
 We say that $F\subset P(\mathbb{N})$ is a filter if for every $A,B\in F$ we have $A\cap B\in F$ and for every $A\in F$ and every $B\supset A$ we have $B\in F$, moreover $\emptyset\notin F$. For an ideal $I$ we consider its dual filter $F_I=\{A : \mathbb{N}\setminus A\in I\}$. We also consider $A_{F_I}(x_n)=\{\sum_{n\in A}x_n : A\in F_I\}$.
It is well-known that if $I$ is an ideal ($F$ is a filter), then $I$ ($F$) has a Baire property if and only if  $I$ ($F$) is meager. Similarly $I$ ($F$) is measurable if and only if  $I$ ($F$) is null, \cite{T}. From this we easily obtain that maximal ideals neither satisfy Baire property nor are Lebesgue measurable. Indeed, if $I$ is maximal, then its complement equals to its dual filter $F_I$, which is maximal as well. If $I$ would have Baire property, then $I$ would be meager. Since $\chi_A\mapsto\chi_{\N\setminus A}$ is a homeomorphism of $\{0,1\}^\mathbb{N}$, then $F_I$ is also meager, and we reach a contradiction. The argument for measure case is the same -- we use the fact that $\chi_A\mapsto\chi_{\N\setminus A}$ preserves the measure $\mu$ on $\{0,1\}^\mathbb{N}$. 

%\begin{fact}
%Let $I$ be a maximal ideal. Then 
%\begin{itemize}
%\item $I$ does not satisfy Baire's property
%\item $I$ is not Lebesque measureable
%\end{itemize}
%\end{fact}

\begin{definition}
We say that  $\phi :P(\mathbb{N})\rightarrow [0,\infty]$ is a submeasure iff
\begin{itemize}
\item $\phi(\emptyset)=0$;
\item $\phi(A\cup B)\leq\phi(A)+\phi(B)$ for each $A,B\subset\mathbb{N}$;
\item $\phi(A)\leq\phi(B)$ for each $A\subset B\subset\mathbb{N}$.
\end{itemize}
Moreover if $\phi(A)=\lim_{n\to\infty}\phi(A\cap\{1,\ldots,n\})$  for every $A\subset\mathbb{N}$ then we say that $\phi$ is upper-semicontinuous.
\end{definition}
There is a nice characterization of $F_{\sigma}$-ideals by Mazur, in terms of submeasures, \cite{Mazur}: 
\begin{theorem}[Mazur]
An ideal $I$ is $F_{\sigma}$ if and only if there exists upper-semicontinuous submeasure $\phi :P(\mathbb{N})\rightarrow [0,\infty]$ such that $I=Fin(\phi)=\{A\subset\mathbb{N} : \phi(A)<\infty\}$.
\end{theorem}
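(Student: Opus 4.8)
The plan is to prove the two implications separately, the backward one being routine and the forward one carrying all the weight. For the direction ``$\Leftarrow$'', suppose $\phi$ is an upper-semicontinuous submeasure and $I=Fin(\phi)$. I would write $I=\bigcup_{m=1}^{\infty}\{A : \phi(A)\le m\}$ and show each sublevel set is closed in $\{0,1\}^{\N}$. By upper-semicontinuity together with monotonicity, $\phi(A)=\sup_n \phi(A\cap\{1,\dots,n\})$, so $\phi(A)\le m$ holds if and only if $\phi(A\cap\{1,\dots,n\})\le m$ for every $n$. For fixed $n$ this condition depends only on the first $n$ coordinates of $\chi_A$ and hence defines a clopen set; intersecting over $n$ yields a closed set. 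Thus $I$ is a countable union of closed sets, i.e. $F_{\sigma}$.

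For ``$\Rightarrow$'', I would start from a representation $I=\bigcup_n C_n$ with $C_n$ closed, and first normalize the cover. Replacing $C_n$ by $\bigcup_{i\le n}C_i$ makes it increasing, and replacing it by its downward closure $\{B : \exists A\in C_n,\ B\subseteq A\}$ makes it hereditary. The downward closure is still closed, being the image under the first projection of $\{(B,A) : B\subseteq A\}\cap(\{0,1\}^{\N}\times C_n)$, which is the intersection of the closed coordinatewise order relation with a closed set, hence compact; and it stays inside $I$ because $I$ is downward closed. So without loss of generality $C_n$ is closed, hereditary, increasing, with $\bigcup_n C_n=I$.

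The heart of the argument is to manufacture \emph{subadditivity}, which the naive rank $A\mapsto\min\{n:A\in C_n\}$ lacks. The key device is to pass to bounded unions: set $F_n=\{b_1\cup\cdots\cup b_k : k\le n,\ b_i\in C_n\}$. Each $F_n$ is closed (a continuous image of the compact $C_n^{\,n}$ under the union map), hereditary (intersect each $b_i$ with a given subset), increasing, and satisfies $\bigcup_n F_n=I$, since $C_n\subseteq F_n\subseteq I$ using that $I$ is closed under finite unions -- which is exactly what prevents us from blowing up to all of $P(\N)$. The gain is the filtration property $a\in F_m,\ b\in F_n\Rightarrow a\cup b\in F_{m+n}$, because $a\cup b$ is a union of at most $m+n$ sets from $C_{\max(m,n)}\subseteq C_{m+n}$.

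Finally I would set $w(A)=\min\{n:A\in F_n\}$ (with $w(\emptyset)=0$) and $\phi(A)=\sup\{w(a):a\subseteq A,\ a\text{ finite}\}$. Subadditivity and monotonicity of $w$ are immediate from the filtration property and heredity, and transfer to $\phi$, while the supremum over finite subsets makes $\phi$ upper-semicontinuous essentially by definition. It then remains to verify $Fin(\phi)=I$: if $A\in I$ then $A\in F_N$ for some $N$, so $w$ is bounded by $N$ on $A$ and all its subsets, whence $\phi(A)<\infty$; and if $A\notin I=\bigcup_n F_n$, then $A\notin F_n$ for each $n$, and since $F_n$ is closed a finite subset $a\subseteq A$ already witnesses $a\notin F_n$, so $w(a)>n$ and therefore $\phi(A)=\infty$. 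The main obstacle is precisely the construction of the subadditive filtration $F_n$ inside $I$; once it is in place, the submeasure axioms and the finite-witness property of closed sets reduce the rest to bookkeeping.
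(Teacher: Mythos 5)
The paper offers no proof of this statement: it is quoted as Mazur's theorem with a pointer to the reference [Mazur], so the only meaningful comparison is with the standard argument, and your proposal is essentially that argument, correctly executed. The backward direction is the usual one: by monotonicity the sequence $\phi(A\cap\{1,\dots,n\})$ increases to $\phi(A)$, so $\{A:\phi(A)\le m\}=\bigcap_n\{A:\phi(A\cap\{1,\dots,n\})\le m\}$ is an intersection of clopen sets and $Fin(\phi)$ is $F_\sigma$. In the forward direction your two normalizations are sound (the downward closure of a closed set is closed, being the projection of a compact subset of the product, and it stays inside $I$ by heredity of $I$), and the bounded-union filtration $F_n=\{b_1\cup\dots\cup b_k : k\le n,\ b_i\in C_n\}$ is exactly the device needed to repair the failure of subadditivity of the naive rank; your checks that $F_n$ is compact (continuous image of $C_n^{\,n}$ under the union map, padding with $\emptyset\in C_n$ when $k<n$), hereditary, increasing, and satisfies $a\in F_m,\ b\in F_n\Rightarrow a\cup b\in F_{m+n}$ are all correct, as is the finite-witness step: if every initial segment $A\cap\{1,\dots,m\}$ lay in the closed set $F_n$, then $\chi_{A\cap\{1,\dots,m\}}\to\chi_A$ would force $A\in F_n$. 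Two small remarks. First, state explicitly that $w(A)=\infty$ when $A\notin\bigcup_n F_n$, so that $\phi$ is defined on all of $P(\mathbb{N})$ and the equivalence $A\notin I\Leftrightarrow\phi(A)=\infty$ reads cleanly. Second, what the paper calls ``upper-semicontinuous'' (the condition $\phi(A)=\lim_{n\to\infty}\phi(A\cap\{1,\dots,n\})$) is what Mazur and the subsequent literature call \emph{lower} semicontinuous; you work with the paper's displayed condition, which is the correct one for the theorem, so this is purely a naming discrepancy and not a flaw in your argument.
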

Using Mazur's characterization we can simply show that $Fin$ and any summable ideal $I_{(a_n)}$ are $F_{\sigma}$-ideals. Indeed, $Fin=Fin(\phi)$ for $\phi(A)=\vert A\vert$ and  $I_{(a_n)}=Fin(\phi)$ for $\phi(A)=\sum_{n\in A}a_n$. 
%We will prove that ideals, which cover $Fin$ are not $G_{\delta}$. 
%\begin{proposition}
%Ideal $Fin$ is not a $G_{\delta}$-set.
%\begin{proof}
%Note that $Fin=\{\{0,1\}^k\times\{0\}^\mathbb{N}: k\in\mathbb{N}\}$ cuts each open set in $\{0,1\}^{\mathbb{N}}$, so $Fin$ is dense in $\{0,1\}^{\mathbb{N}}$. In the same way we prove that $F_{Fin}=\{\{0,1\}^k\times\{1\}^\mathbb{N}: k\in\mathbb{N}\}$ is dense. Since $F_{Fin}\subset\{0,1\}^{\mathbb{N}}\setminus Fin$ we obtain that $\{0,1\}^{\mathbb{N}}\setminus Fin$ is a dense $G_{\delta}$. The space $\{0,1\}^{\mathbb{N}}$ is compact, which implies its completeness. Thus from the well known Baire's theorem we obtain that $Fin$ cannot be $G_{\delta}$.
%Let $\sum\limits_{n=1}^{\infty}x_n$ be a series such that $f$ is $1-1$. Is it folklore that $A_{Fin}(x_n)$ is dense
%\end{proof}
%\end{proposition}
The following result is folklor but we present its short proof.
\begin{proposition}
Ideal $I\supset Fin$ is not a $G_{\delta}$-set. In particular $Fin$ is not $G_\delta$.  
\end{proposition}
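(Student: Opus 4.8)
The plan is to argue by contradiction using the Baire category theorem in the compact metric space $\{0,1\}^{\mathbb{N}}$, exploiting the symmetry between $I$ and its dual filter $F_I$ under the complementation homeomorphism $h(\chi_B)=\chi_{\mathbb{N}\setminus B}$. The guiding idea is that a dense $G_\delta$ set is comeager, so if both $I$ and a disjoint translate of it were dense $G_\delta$ we would contradict Baire's theorem.

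First I would record the relevant density and disjointness facts. Since $Fin\subset I$, the ideal $I$ is dense in $\{0,1\}^{\mathbb{N}}$: a basic clopen set prescribes finitely many coordinates, say a finite position set $F$ with values, and the finite set $S$ consisting of the positions in $F$ assigned the value $1$ already lies in $Fin\subset I$ and meets the prescription. Because $h$ is a homeomorphism of $\{0,1\}^{\mathbb{N}}$ carrying $I$ onto $F_I$, the filter $F_I$ is dense as well. Crucially, $I\cap F_I=\emptyset$: if both $B\in I$ and $\mathbb{N}\setminus B\in I$, then $\mathbb{N}=B\cup(\mathbb{N}\setminus B)\in I$ since $I$ is closed under unions, contradicting $\mathbb{N}\notin I$.

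Now suppose toward a contradiction that $I$ is $G_\delta$, say $I=\bigcap_n U_n$ with each $U_n$ open. Since $I\subseteq U_n$ and $I$ is dense, every $U_n$ is dense open, so $I$ is a dense $G_\delta$. Applying $h$, the set $F_I=h(I)$ is also a dense $G_\delta$. But the Baire category theorem in $\{0,1\}^{\mathbb{N}}$ forces the intersection of two dense $G_\delta$ sets to be a nonempty dense $G_\delta$, whereas $I\cap F_I=\emptyset$. This contradiction shows that $I$ is not $G_\delta$, and the special case $I=Fin$ follows at once.

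I expect the main obstacle to be resisting the temptation to reuse the elementary argument valid only for $Fin$ (that $Fin$ is countable and dense, hence meager, and a meager set cannot be a dense $G_\delta$): for a general ideal $I\supset Fin$, which may have cardinality $\Co$, countability is unavailable. The right replacement is precisely the complementation symmetry, which manufactures a second dense $G_\delta$ set disjoint from $I$ without any appeal to the size of $I$.
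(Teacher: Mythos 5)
Your proof is correct and follows essentially the same route as the paper's: both establish density of $I$ (from $Fin\subset I$) and of $F_I$ (the paper via $F_I\supset F_{Fin}$, you via the complementation homeomorphism applied to the dense set $I$ -- an immaterial difference), note the disjointness $I\cap F_I=\emptyset$, and derive a contradiction from Baire's theorem in the compact space $\{0,1\}^{\mathbb{N}}$ by exhibiting two disjoint dense $G_\delta$ sets.
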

\begin{proof}
Note that $Fin$ and $F_{Fin}$ are dense in $\{0,1\}^{\mathbb{N}}$. The space $\{0,1\}^{\mathbb{N}}$ is compact, which implies its completeness. Since $I\supset Fin$, we know that $I$ is dense. Suppose that $I$ is $G_{\delta}$. Since $\chi_A\rightarrow\chi_{\mathbb{N}\setminus A}$ is a homeomorphism of $\{0,1\}^{\mathbb{N}}$ onto itself we obtain that $F_I$ is also $G_{\delta}$ and since $F_I\supset F_{Fin}$ we obtain its density. Furthermore $I\cap F_I=\emptyset$, which by Baire's theorem leads to a contradiction.
\end{proof}

\section{Conditionally convergent series of reals}
Let $\sum_{n=1}^{\infty}x_n$ be a conditionally convergent series of reals. By the Riemann Theorem we know that $SR(x_n)=\mathbb{R}$ and it is also known that $A(x_n)=\mathbb{R}$, see \cite{J}. The set $A_{Fin}(x_n)$ is dense on the real line, because every sum of the series can be approximated by its partial sums. Since $I\supset Fin$, we get  $\overline{A_{I}(x_n)}=\mathbb{R}$. 

To our best knowledge this is a first paper in which achievement set is considered with respect to an ideal, although ideal-sum ranges have been considered before. In \cite{FS} Filipów and Szuca defined an ideally supported sum range $SR_{I}(x_n)=\{\sum_{n=1}^{\infty}x_{\sigma(n)} : \sigma\in S_{\infty}, supp(\sigma)=\{n : \sigma(n)\neq n\}\in I\}$ for an ideal $I$. Filipów and Szuca were interested whether $SR_{I}(x_n)=\mathbb{R}$ for any conditionally convergent series $\sum_{n=1}^{\infty}x_n$. They characterized ideals $I$ with this property, where a crucial role was played by summable ideals.
Filipów and Szuca's characterization reads as follows:
\begin{theorem}\label{filipowszuca}
The following assertions are equivalent:\\
(i) ideal $I$ is not contained in any summable ideal;\\
(ii) for every conditionally convergent series $\sum_{n=1}^{\infty}x_n$ there exists $W\in I$ such that the series $\sum_{n\in W}x_n$ is conditionally convergent;\\
(iii) for every conditionally convergent series $\sum_{n=1}^{\infty}x_n$ we have $\{\sum_{n=1}^{\infty}x_{\sigma(n)} : supp(\sigma)\in I\}=\mathbb{R}$.\\
\end{theorem}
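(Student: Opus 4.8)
The natural plan is to establish the cycle $(i)\Rightarrow(ii)\Rightarrow(iii)\Rightarrow(i)$, using the summable ideals of Section~2 as the bridge between the three conditions.

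For $(i)\Rightarrow(ii)$, fix a conditionally convergent series $\sum_{n=1}^{\infty}x_n$ and write $P=\{n:x_n>0\}$, $Q=\{n:x_n<0\}$, so that $\sum_{n\in P}x_n=+\infty$, $\sum_{n\in Q}|x_n|=+\infty$ and $x_n\to0$. I would first manufacture two summable ideals from the two parts: fixing a strictly positive summable $(\varepsilon_n)$ and putting $a_n=x_n^{+}+\varepsilon_n$, $a_n'=x_n^{-}+\varepsilon_n$ gives divergent series of positive terms with $\sum_{n\in A}a_n<\infty\iff\sum_{n\in A\cap P}x_n<\infty$ and likewise for $a_n'$. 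By $(i)$, $I$ lies in neither $I_{(a_n)}$ nor $I_{(a_n')}$, so there are $W_1,W_2\in I$ with $\sum_{n\in W_1\cap P}x_n=\infty$ and $\sum_{n\in W_2\cap Q}|x_n|=\infty$; then $W=W_1\cup W_2\in I$ carries an infinite positive and an infinite negative reserve simultaneously. The remaining and most delicate step, which I expect to be the main obstacle, is to thin $W$ to a subset $W'\subseteq W$ (automatically in $I$) for which $\sum_{n\in W'}x_n$ is conditionally convergent. I would do this greedily in the natural order: processing $n=1,2,\dots$, admit positive terms of $W$ (skipping negatives) until the running sum first exceeds $1/k$, then admit negatives until it drops below $-1/(k+1)$, and iterate with $k\to\infty$. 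Because $x_n\to0$ the overshoots vanish and the partial sums converge to $0$; because each sweep consumes mass of order $1/k$ and $\sum 1/k=\infty$, the admitted terms satisfy $\sum_{n\in W'}|x_n|=\infty$. The crux is this simultaneous control: the thresholds $1/k$ tend to $0$ (forcing convergence) yet are not summable (forcing non-absolute convergence), while the infinite reserves guarantee the procedure never stalls.

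For $(ii)\Rightarrow(iii)$, fix a conditionally convergent series and a target $t\in\mathbb{R}$, and let $W\in I$ be the set from $(ii)$, so that $\sum_{n\in W}x_n$ is conditionally convergent. I would restrict attention to permutations $\sigma$ fixing $\mathbb{N}\setminus W$; for these $\mathrm{supp}(\sigma)\subseteq W\in I$, hence $\mathrm{supp}(\sigma)\in I$, and $\sum_{n=1}^{\infty}x_{\sigma(n)}$ consists of the original terms at positions outside $W$ (whose contribution converges to the fixed value $\sum_{n\notin W}x_n$) interleaved with an arbitrary rearrangement of the terms indexed by $W$. Since the $W$-subseries is conditionally convergent, a Riemann-type argument — adding positive or negative $W$-terms to steer the running total toward $t$, the interspersed fixed terms forming only a vanishing perturbation of the tail — orders the $W$-terms so that the whole series converges to $t$. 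Hence every real is attained and $(iii)$ follows.

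For $(iii)\Rightarrow(i)$ I would argue by contraposition: assuming $I\subseteq I_{(a_n)}$ for some divergent series $\sum a_n$ of positive terms, I produce a conditionally convergent series whose ideal sum range is a single point. First a lemma: there is $(b_n)$ with $0<b_n\le a_n$, $b_n\to0$ and $\sum b_n=\infty$. I would prove it by cutting $\mathbb{N}$ into consecutive blocks $E_m$ with $\sum_{n\in E_m}a_n\ge1$ and setting $b_n=\frac{a_n}{m\sum_{j\in E_m}a_j}$ on $E_m$, so that $b_n\le1/m\to0$, $\sum_{n\in E_m}b_n=1/m$ and thus $\sum b_n=\infty$. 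Choosing signs $\varepsilon_n\in\{-1,1\}$ greedily opposite to the current partial sum and putting $x_n=\varepsilon_n b_n$ then yields, since $b_n\to0$, a convergent series with $\sum|x_n|=\sum b_n=\infty$, i.e.\ a conditionally convergent series with $|x_n|\le a_n$. Finally, for any permutation $\sigma$ with $\mathrm{supp}(\sigma)\in I$ one has $\mathrm{supp}(\sigma)\in I_{(a_n)}$, whence $\sum_{n\in\mathrm{supp}(\sigma)}|x_n|\le\sum_{n\in\mathrm{supp}(\sigma)}a_n<\infty$; rearranging an absolutely convergent block leaves both convergence and the value intact, so $\sum_{n=1}^{\infty}x_{\sigma(n)}=\sum_{n=1}^{\infty}x_n$ for every such $\sigma$. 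Thus the ideal sum range equals $\{\sum_{n=1}^{\infty}x_n\}\neq\mathbb{R}$, contradicting $(iii)$ and closing the cycle.
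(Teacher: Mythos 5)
Your proposal cannot be checked against an in-paper argument, because the paper gives none: Theorem \ref{filipowszuca} is quoted as Filip\'ow and Szuca's characterization, with the proof residing in the cited reference \cite{FS}. Judged on its own merits, your cycle $(i)\Rightarrow(ii)\Rightarrow(iii)\Rightarrow(i)$ is correct, and it handles the two genuinely delicate points properly. In $(i)\Rightarrow(ii)$, padding $x_n^{+}$ and $x_n^{-}$ separately with a summable strictly positive $(\varepsilon_n)$ is exactly the right move: the padding is needed because $(x_n^{+})$ has zero terms and so does not itself generate a summable ideal in the paper's sense, and using \emph{two} ideals rather than the single one generated by $|x_n|+\varepsilon_n$ is essential, since the latter would only produce $W\in I$ with $\sum_{n\in W}|x_n|=\infty$, possibly with one-sided divergence, from which no conditionally convergent subseries can be extracted. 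You also correctly recognize that $W=W_1\cup W_2$ must be thinned (a set on which both signed parts diverge need not carry a convergent subseries), and your greedy oscillation with thresholds $1/k$ -- vanishing but not summable -- together with downward closedness of $I$ does produce $W'\in I$ with $\sum_{n\in W'}x_n$ conditionally convergent. In $(ii)\Rightarrow(iii)$ your argument is sound and can even be streamlined: since $\sum_{n\notin W}x_n$ converges on its own, it suffices to apply the classical Riemann theorem to the subseries indexed by $W$ with target $t-\sum_{n\notin W}x_n$, so no estimate on the interleaving is needed. In $(iii)\Rightarrow(i)$, the regularization $0<b_n\le a_n$, $b_n\to 0$, $\sum b_n=\infty$, the greedy choice of signs, and the observation that a permutation whose support $S$ satisfies $\sum_{n\in S}|x_n|<\infty$ maps $S$ onto itself and hence preserves the sum, are all correct. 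The only caveat is that your two greedy constructions are described rather than verified in full, but the estimates you indicate (overshoots bounded by single terms, mass of order $1/k$ per sweep, sign changes forcing the partial sums to return near $0$) do close them, so the proposal stands as a complete and essentially standard proof of the quoted theorem.
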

In our notation (iii) can be written as $SR_{I}(x_n)=\mathbb{R}$. From (ii) we immediately obtain  $A_{I}(x_n)=\mathbb{R}$. However, the equality $A_{I}(x_n)=\mathbb{R}$ does not imply (ii) in general, which is shown in the following examples.
\begin{example}\label{niegestyideal}
\emph{
Define $x_{2n-1}=\frac{(-1)^n}{n}, x_{2n}=\frac{1}{2^n}$. Let us consider $I=2\mathbb{N}+Fin$. Note that the series  $\sum_{n=1}^{\infty}x_n$ is condtionally convergent and for each $A\cup F\in I$, where $A\subset 2\mathbb{N}$ and $F\in Fin$, we have  $\sum_{n\in A\cup F}\vert x_n\vert=\sum_{n\in A}\vert x_n \vert+\sum_{n\in F\setminus A}\vert x_n \vert\leq 1+\sum_{n\in F\setminus A}\vert x_n \vert<\infty$, since $F\in Fin$. Hence $\sum_{n\in A\cup F}x_n$ is absolutely convergent, so it cannot be conditionally convergent. In particular, it implies that $SR_{I}(x_n)=\{\sum_{n=1}^{\infty}x_n\}$, since a rearrangement of absolutely convergent series does not change its limit.
\\On the other hand, for each $x\in\mathbb{R}$ one can find a finite set $G\subset 2\mathbb{N}-1$ such that $x-\sum_{n\in G} x_n=y\in[0,1]$. Let $B\subset 2\mathbb{N}$ be such that $y=\sum_{n\in B} x_n$. Thus $B\cup G\in I$ and $x=\sum_{n\in B\cup G} x_n$. Hence $A_{I}(x_n)=\mathbb{R}$. 
}
\end{example}
\begin{example}
\emph{
Let $x_n=\frac{(-1)^n}{n}$ for $n\in\mathbb{N}$ and $I=I_{(\frac{1}{n})}$. Then $A_{I}(x_n)=\mathbb{R}$.
Indeed, fix $x>0$. Since $x_{2n}\to 0$ and $\sum_{n=1}^{\infty}x_{2n}=\infty$, then there exists $F\subset 2\mathbb{N}$ such that $\sum_{n\in F}x_n=x$. Clearly $F\in I$. For $x<0$ we take $F\subset 2\mathbb{N}-1$. 
\\Suppose that there exists $W\in I$ such that  $\sum_{n\in W}x_{n}$ is conditionally convergent. Then  $\sum_{n\in W\cap 2\mathbb{N}}x_{n}=\infty$ and $\sum_{n\in W\cap 2\mathbb{N}-1}x_{n}=-\infty$.
Hence $\sum_{n\in W}\frac{1}{n}=\sum_{n\in W}\vert x_n\vert=\infty$, which means that $W\notin I$.
Finally, note that $I$ is a dense ideal, while that defined in Example \ref{niegestyideal} is not dense. Moreover, $I$ is a summable ideal and $SR_{I}(x_n)=\{\sum_{n=1}^{\infty}x_n\}$.
}
\end{example}
Now we will show that for every conditionally convergent series $\sum_{n=1}^{\infty}x_n$ we can construct ideal $I$ with $A_{I}(x_n)=\mathbb{R}$ and Theorem \ref{filipowszuca}(ii) is not fulfilled.
\begin{theorem}
Let  $\sum_{n=1}^{\infty}x_n$  be a conditionally convergent series. Then for any $a,b\in\mathbb{R}$, $a<b$ there exists its absolutely convergent subsequence $\sum_{n=1}^{\infty}x_{k_n}$ such that $A(x_{k_n})$ contains the interval $[a,b]$.
\begin{proof}
Let $x=b-a$. Then one can find a sequence of finite sets of indices $(F_n)_{n=1}^{\infty}$ such that 
\begin{enumerate}
\item $\max F_n<\min F_{n+1}$ for every $n\in\mathbb{N}$;
\item for each $j\in\cup_{n=1}^{\infty}F_n$ we have $x_j>0$; 
\item $\frac{y_{n-1}}{2^n}\leq y_n\leq \frac{3x}{2^{n+1}}$, where $y_0=x$, $y_n=\sum_{k\in F_n}x_k$ for each $n\in\mathbb{N}$; 
\end{enumerate} 
Note that by $(1)$ we get that $A((x_n)_{n\in \cup_{k=1}^{\infty}F_k})\subset A(x_n)$ and by $(2)$ and $(3)$ we obtain that $\sum_{n\in \cup_{k=1}^{\infty}F_k}x_n$ is absolutely convergent. Fix $y\in[0,x]$. Then $y=\sum_{n=1}^{\infty}\varepsilon_n y_n$, where we define $(\varepsilon_n)_{n=1}^{\infty}$ inductively in the following way $\varepsilon_n=1$ if $y-\sum_{k=1}^{n-1}\varepsilon_k y_k\geq y_n$ and $\varepsilon_n=0$ otherwise. Hence $A((x_n)_{n\in \cup_{k=1}^{\infty}F_k})\supset A(y_n)\supset [0,x]$. One can find an absolutely convergent subseries $\sum_{n=1}^{\infty}x_{p_n}=a$ with each term not greater than $x$ and such that $(p_n)\cap \bigcup_{k=1}^{\infty}F_k=\emptyset$. Hence $(k_n)=(p_n)\cup \bigcup_{k=1}^{\infty}F_k=\emptyset$ satisfies the thesis.
\end{proof}
\end{theorem}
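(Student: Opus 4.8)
The plan is to split the target interval $[a,b]$ into a ``shape'' part and a ``location'' part. Writing $x=b-a$, I would first produce an absolutely convergent subseries of \emph{positive} terms whose achievement set already contains $[0,x]$, and then, using a \emph{disjoint} collection of the remaining terms, produce a second absolutely convergent subseries summing to exactly $a$. Selecting the whole second collection always, together with an arbitrary sub-collection of the first, then realizes every value in $a+[0,x]=[a,b]$, so the two subseries together furnish the required $(k_n)$.

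For the shape part I would exploit that, since $\sum_{n=1}^\infty x_n$ is conditionally convergent, its positive terms form a divergent series whose terms tend to $0$. I fix geometric targets, say $t_n=x(2/3)^n$, so that $\sum_{k>n}t_k=2t_n>t_n$ and $\sum_n t_n=2x\ge x$. Because the positive terms tend to $0$ and have divergent sum, I can carve out consecutive finite blocks $F_1<F_2<\cdots$ of positive indices whose sums $y_n=\sum_{j\in F_n}x_j$ satisfy $t_n\le y_n\le t_n+\varepsilon_n$ with $\varepsilon_n\le t_n$: each block is stopped the moment its running sum passes $t_n$, and since the terms used can be taken arbitrarily small (start the block far out), the overshoot $\varepsilon_n$ is controlled. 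Then $\sum_n y_n\le 4x<\infty$, so $\sum_{j\in\bigcup F_n}x_j$ is absolutely convergent, and the slack $\varepsilon_n\le t_n$ guarantees the interval-filling (Kakeya) inequality $y_n\le t_n+\varepsilon_n\le 2t_n\le\sum_{k>n}y_k$ for every $n$.

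With interval-filling in hand, the Kakeya criterion recalled in the introduction yields $A(y_n)=[0,\sum_n y_n]\supseteq[0,2x]\supseteq[0,x]$; concretely one runs the greedy procedure that sets $\varepsilon_n=1$ exactly when the current remainder is at least $y_n$, and checks by induction that the remainder $r_n$ stays in $[0,\sum_{k>n}y_k]$ and hence tends to $0$. Since $\max F_n<\min F_{n+1}$, selecting whole blocks corresponds to selecting the underlying terms, so $A(y_n)\subseteq A\big((x_j)_{j\in\bigcup F_n}\big)$ and therefore $A\big((x_j)_{j\in\bigcup F_n}\big)\supseteq[0,x]$.

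Finally, for the location part I note that the blocks consume only finitely much positive mass ($\sum_n y_n\le 4x$), so the unused positive terms still have divergent sum, while \emph{all} negative terms remain untouched; in either sign the leftover terms tend to $0$ and have infinite total mass, so by the same greedy representation their achievement set is $[0,\infty)$ (respectively $(-\infty,0]$). Hence I can pick a disjoint subseries $\sum_m x_{p_m}$ summing to exactly $a$, which is absolutely convergent as a one-signed series with finite sum. Letting $(k_n)$ enumerate $\bigcup_n F_n\cup\{p_m:m\in\mathbb{N}\}$ then gives an absolutely convergent subseries with $A(x_{k_n})\supseteq a+[0,x]=[a,b]$. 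The main obstacle is the block construction of the second paragraph: one must simultaneously keep the blocks made of genuine terms, force the total above $x$, and secure the interval-filling inequality robustly against the unavoidable overshoot — the geometric targets with ratio $2/3$ are precisely what make these three requirements compatible.
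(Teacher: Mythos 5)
Your proposal is correct and follows essentially the same route as the paper: blocks of positive terms whose sums $y_n$ are geometrically pinned (your targets $x(2/3)^n$ with overshoot at most $t_n$ play the role of the paper's bounds $\frac{y_{n-1}}{2^n}\leq y_n\leq\frac{3x}{2^{n+1}}$), the greedy choice of $\varepsilon_n$ giving $A(y_n)\supseteq[0,x]$, and a disjoint one-signed subseries summing to $a$ to translate $[0,x]$ onto $[a,b]$. If anything, your explicit verification that $y_n\leq 2t_n\leq\sum_{k>n}t_k\leq\sum_{k>n}y_k$ makes the interval-filling step more airtight than the paper's stated condition (3).
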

\begin{corollary}
Let $\sum_{n=1}^{\infty}x_n$  be a conditionally convergent series, then there exists an ideal $I$ such that  $A_{I}(x_n)=\mathbb{R}$ and the assertion of Theorem \ref{filipowszuca}(ii) is not satisfied. 
\begin{proof}
Let $\sum_{n=1}^{\infty}x_{k_n}$ be an absolutely convergent subsequence such that  $A(x_{k_n})\supset [a,b]$. Define $I=(k_n)+Fin$. Note that $\sum_{n\in\mathbb{N}\setminus (k_n)}x_n$ is conditionally convergent, so $A((x_n)_{n\in\mathbb{N}\setminus (k_n)})=\mathbb{R}$ and what is more $\overline{A}_{Fin}((x_n)_{n\in\mathbb{N}\setminus (k_n)})=\mathbb{R}$. Fix $x\in\mathbb{R}$. On can find $F\subset \mathbb{N}\setminus (k_n)$ such that $y=\sum_{n\in F}x_n\in (x-b,x-a)$. Since $x-y\in (a,b)$ one can find $G\subset (k_n)$ such that $x-y=\sum_{n\in G}x_n$. Thus $x=\sum_{n\in F\cup G}x_n$. Note that $F\cup G\in I$, so $x\in  A_{I}(x_n)$. Hence $A_{I}(x_n)=\mathbb{R}$. On the other hand for each $A\in I$ we have $\sum_{n\in A}\vert x_n\vert<\infty$, so the second condition in \ref{filipowszuca} is not satisfied.
\end{proof}
\end{corollary}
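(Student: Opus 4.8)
The plan is to feed the interval-achieving subseries produced by the preceding theorem into the ideal generated by its index set, and then verify the two required properties separately. First I would fix any pair $a<b$ (say $a=-1$, $b=1$) and invoke the preceding theorem to obtain an absolutely convergent subseries $\sum_{n=1}^{\infty}x_{k_n}$ with $A(x_{k_n})\supseteq[a,b]$. Writing $K=\{k_n:n\in\mathbb{N}\}$, I would then set $I=(k_n)+Fin$, the smallest ideal containing $K$ and all finite sets; concretely $I=\{S\subseteq\mathbb{N}:S\setminus K\text{ is finite}\}$. The whole argument rests on the fact that $I$ is engineered so that each of its members sees only the absolutely convergent block $K$ together with a finite remainder, which will kill condition (ii), while the complement $\mathbb{N}\setminus K$ retains enough of the conditional convergence to let finite perturbations reach everywhere.

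To prove $A_I(x_n)=\mathbb{R}$, I would first record that the complementary series $\sum_{n\in\mathbb{N}\setminus K}x_n$ is again conditionally convergent: it converges, being the difference of the convergent series $\sum_n x_n$ and the convergent series $\sum_{n\in K}x_n$, while $\sum_{n\in\mathbb{N}\setminus K}|x_n|=\infty$ since the total sum of absolute values is infinite and $\sum_{n\in K}|x_n|<\infty$. As noted at the start of this section, the finite-support achievement set of a conditionally convergent series is dense in $\mathbb{R}$, so $\overline{A_{Fin}((x_n)_{n\in\mathbb{N}\setminus K})}=\mathbb{R}$. Now fix $x\in\mathbb{R}$. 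I would choose a finite $F\subseteq\mathbb{N}\setminus K$ with $y=\sum_{n\in F}x_n\in(x-b,x-a)$, which is possible by density; then $x-y\in(a,b)\subseteq A(x_{k_n})$, so there is $G\subseteq K$ with $\sum_{n\in G}x_n=x-y$. Since $F$ is finite and $G\subseteq K$, the union $F\cup G$ lies in $I$, and $\sum_{n\in F\cup G}x_n=x$. As $x$ was arbitrary, $A_I(x_n)=\mathbb{R}$.

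For the failure of Theorem~\ref{filipowszuca}(ii), I would take an arbitrary $A\in I$ and split it as $A=(A\cap K)\cup(A\setminus K)$, where $A\setminus K$ is finite by definition of $I$. Then $\sum_{n\in A}|x_n|=\sum_{n\in A\cap K}|x_n|+\sum_{n\in A\setminus K}|x_n|$, the first summand bounded by $\sum_{n\in K}|x_n|<\infty$ and the second a finite sum. Hence $\sum_{n\in A}x_n$ is absolutely convergent for every $A\in I$, so no $W\in I$ can make $\sum_{n\in W}x_n$ conditionally convergent, and (ii) indeed fails.

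The main obstacle is the two-stage ``approximate then hit exactly'' step in the second paragraph: everything hinges on being able to land the finite partial sum $y$ strictly inside the shifted open interval $(x-b,x-a)$, so that the residual $x-y$ falls in the \emph{open} interval $(a,b)$ where the preceding theorem guarantees an exact representation drawn from $K$. This in turn relies on the complement series being genuinely conditionally convergent, so that its finite-support sums are dense and can be steered into an arbitrary nonempty open interval; the only real content beyond bookkeeping is confirming that deleting the absolutely convergent block $K$ does not destroy conditional convergence, which the absolute-sum computation above settles.
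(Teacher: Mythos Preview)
Your proof is correct and follows essentially the same route as the paper: take the absolutely convergent subseries with $A(x_{k_n})\supseteq[a,b]$ from the preceding theorem, set $I=(k_n)+Fin$, use density of finite subsums of the (still conditionally convergent) complementary series to land in $(x-b,x-a)$, and then hit $x$ exactly with a subset of $K$. Your write-up is in fact more explicit than the paper's in justifying why $\sum_{n\in\mathbb{N}\setminus K}x_n$ remains conditionally convergent and in spelling out the absolute-convergence computation for arbitrary $A\in I$.
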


Here we complete the characterization of Filip\'ow and Szuca. We show that $SR_I(x_n)$ can be a singleton, the whole line or halfline, while $SR(x_n)$ can be a singleton or $\R$ only.

\begin{proposition}\label{wytracaniegranicy}
Let $\sum_{n=1}^{\infty}x_n$ be a divergent series of positive terms such that $\lim_{n\to\infty}x_n=0$. Then for any $x\geq 0$ there exists $\sigma\in S_{\infty}$ such that $\sum_{n=1}^{\infty}(x_n-x_{\sigma(n)})=x$. 
\begin{proof}
For $x=0$ we simply take $\sigma=id$. Fix $x>0$. One can find $k_1\geq 2$ such that $\sum_{n=1}^{k_1-1}x_n\leq  x+x_1$ and $\sum_{n=1}^{k_1}x_n> x+x_1$. Let $m$ be such that for every $p>m$ we have $x_p<\frac{x_{k_1}}{k_1-1}$. Define $\sigma(n)=x_{m+n}$ for $n\in\{1,\ldots,k_1-1\}$, $\sigma(k_1)=1$. Hence 
$$x+x_{k_1}=x+x_1+x_{k_1}-x_1\geq\sum_{n=1}^{k_1}x_n - x_1>\sum_{n=1}^{k_1}(x_n-x_{\sigma(n)})\geq x+x_1-x_1-x_{k_1}=x-x_{k_1}.$$
Assume that we have defined $\sigma(n)$ for $n\in\{1,\ldots,r\}$ for some $r\geq k_1$. Consider two cases:
\begin{enumerate}
 \item  if  $\sum_{n=1}^{r}(x_n-x_{\sigma(n)})\geq x$, then let $l\in\mathbb{N}$ be such that $x_l=\max\{x_k: k\notin\sigma(\{1,\ldots,r\})\}$; we put $\sigma(r+1)=l$.
\item if $\sum_{n=1}^{r}(x_n-x_{\sigma(n)})<x$, then denote $\alpha=x-\sum_{n=1}^{r}(x_n-x_{\sigma(n)})>0$. One can find $k_2\geq 1$ such that $\sum_{n=r+1}^{r+k_2-1}x_n\leq\alpha$ and $\sum_{n=r+1}^{r+k_2}x_n>\alpha$. Put $\delta=\sum_{n=r+1}^{r+k_2}x_n-\alpha>0$. We put $\sigma(r+1)= \min\{n: n\notin\sigma(\{1,\ldots,r\}), x_n<\min\{\frac{\delta}{k_2},x_{r+1},x_{r+2},\ldots,x_{r+k_2}\}\}$.
\end{enumerate}
We continue the construction by induction. Note that each of the conditions (1) and (2) will appear infinitely many times during this construction. Indeed, note that $\vert\sum_{n=1}^{p}(x_n-x_{\sigma(n)})-x\vert\geq \vert\sum_{n=1}^{p+1}(x_n-x_{\sigma(n)})-x\vert$ if in steps $p$ and $p+1$ the same condition ((1) or (2)) is fulfilled. On the other hand if between steps $p$ and $p+1$ the condition changes (from (1) to (2) or vice-versa), then $\vert\sum_{n=1}^{p+1}(x_n-x_{\sigma(n)})-x\vert\leq\vert x_{p+1} - x_{\sigma(p+1)}\vert$. Since $\lim_{n\to\infty} x_n = \lim_{n\to\infty} x_{\sigma(n)} = 0$, then $\sum_{n=1}^{\infty}(x_n-x_{\sigma(n)})=x$.

\end{proof}
\end{proposition}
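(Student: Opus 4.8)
The plan is to dispose of the trivial case $x=0$ (take $\sigma=\mathrm{id}$) and then, for $x>0$, build the permutation $\sigma$ one value at a time by a greedy back-and-forth procedure that drives the partial sums $S_N=\sum_{n=1}^N(x_n-x_{\sigma(n)})$ toward $x$. The single most useful preliminary observation is that for \emph{any} bijection $\sigma$ the sequence $(x_{\sigma(n)})$ is a rearrangement of the null sequence $(x_n)$ and hence also tends to $0$; consequently the increments $x_n-x_{\sigma(n)}$ tend to $0$ no matter how $\sigma$ is chosen. Thus the whole difficulty lies not in making the increments small but in steering the running sum so that it accumulates to exactly $x$.

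First I would describe the construction. Having fixed $\sigma(1),\dots,\sigma(r)$ with pairwise distinct (hence injective) values, I compare $S_r$ with $x$. If $S_r\ge x$ I assign to the next slot the \emph{largest} still-unused term, i.e. $\sigma(r+1)$ an index realizing $\max\{x_k : k\notin\sigma(\{1,\dots,r\})\}$, so as to subtract as much as possible and pull $S$ down. If $S_r<x$ I instead push $S$ up: using the divergence $\sum x_n=\infty$ I locate the smallest block of forthcoming natural terms $x_{r+1},\dots,x_{r+k_2}$ whose sum just exceeds the deficit $\alpha=x-S_r$, and I pair these slots with \emph{very small} unused terms (each below a threshold of order $\delta/k_2$, where $\delta=\sum_{n=r+1}^{r+k_2}x_n-\alpha$), so the net gain over the block is within $\delta$ of $\alpha$. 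Here the two hypotheses are used in tandem: divergence guarantees the block exists and overshoots the deficit, while $x_n\to0$ guarantees that arbitrarily small unused terms are always available and that the overshoot $\delta\le x_{r+k_2}$ is itself small.

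For convergence I would argue in three steps. Step one: both alternatives occur infinitely often --- a terminal run of the ``pull down'' alternative would, because we always remove the current maximum while the divergent tail still supplies the additions, force $S$ below $x$, and a terminal run of the ``push up'' alternative overshoots $x$ by construction; either way $S$ must cross $x$ again. Step two: inside a maximal run of a single alternative the quantity $|S_r-x|$ is non-increasing (the ``pull down'' increments are $\le 0$ and keep $S\ge x$, the ``push up'' increments are $\ge 0$ and keep $S\le x$ until the crossing), so the largest excursion of a run occurs at its first step, immediately after a crossing. Step three: at each crossing the overshoot is bounded by the single increment effected there, namely by $\delta\le x_{r+k_2}$ when crossing upward and by $x_{\sigma(r+1)}$ (the current maximal unused term) when crossing downward; since the used-up maxima decrease to $0$ and $x_{r+k_2}\to 0$, these bounding increments tend to $0$. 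Combining the three steps gives $\sup_{p\ge P}|S_p-x|\to 0$, i.e. $S_N\to x$.

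The step I expect to be the main obstacle is reconciling the sum control with the requirement that $\sigma$ be a genuine permutation. Injectivity is automatic, but surjectivity must be checked: I would argue that the finitely many terms exceeding any given $\varepsilon$ can only be consumed by the ``pull down'' alternative, which removes them in non-increasing order and fires infinitely often, so every such index is eventually used, while the ``push up'' alternative sweeps up the small terms; a short argument then shows the smallest never-used index cannot survive forever. The delicate point is that this surjectivity bookkeeping must not conflict with the monotonicity claim in Step two, which tacitly needs the natural term $x_{r+1}$ to still be available when it is added, and pinning down the selection rule so that both hold simultaneously is where the real care is required.
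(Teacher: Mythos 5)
Your proposal follows essentially the same route as the paper's proof: the identical greedy construction (when the partial sum $S_r\ge x$, subtract the largest unused term; when $S_r<x$, take the shortest block of upcoming terms whose sum just exceeds the deficit $\alpha$ and pair it with unused terms below the threshold $\delta/k_2$), and the identical convergence argument (each alternative occurs infinitely often, $|S_p-x|$ is non-increasing within a run of one alternative, and each crossing overshoots by at most one increment $|x_{p+1}-x_{\sigma(p+1)}|\to 0$, using that $x_{\sigma(n)}\to 0$ for any injective $\sigma$). The only substantive difference is that you explicitly flag the surjectivity bookkeeping and the availability of large unused terms for the monotonicity step, points the paper's proof leaves implicit.
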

\begin{remark}
Proposition \ref{wytracaniegranicy} is a generalization and strengthening of the Riemann Theorem. 
Let $\sum_{n=1}^{\infty}x_n$ be a conditionally convergent series with a limit $y$. 
To obtain $\sum_{n=1}^{\infty}x_{\sigma(n)}=x$ for a given $x\in\mathbb{R}$ we define  $\sigma$ as follows: if $x<y$, then by Proposition \ref{wytracaniegranicy} there exists $\sigma$ with $supp(\sigma)\subset \{n :x_n>0\}$ such that 
$$y-x=\sum_{n=1}^{\infty}(x_n-x_{\sigma(n)})=\sum_{n=1}^{\infty}x_n-\sum_{n=1}^{\infty}x_{\sigma(n)}=y-\sum_{n=1}^{\infty}x_{\sigma(n)}.$$ 
Otherwise we use  Proposition \ref{wytracaniegranicy} to find an appropriate $\sigma$ with $supp(\sigma)\subset \{n :x_n<0\}$.
%Since zero terms change nothing we may assume that $\{n : x_n=0\}=\emptyset$. 
%We want to rearrange its terms to obtain  $\sum_{n=1}^{\infty}x_{\sigma(n)}=x$ for some $x\in\mathbb{R}$. Let $P=\{n :x_n>0\}$. If $x<y$, then by the Propositon \ref{wytracaniegranicy}, we know that we can define $\sigma$ in the way $supp(\sigma)=\{n : \sigma(n)\neq n\}\subset P$. Otherwise, if $x>y$, then simple changes shows us that we can construct $\sigma$ to satisfy $supp(\sigma)=\{n : \sigma(n)\neq n\}\subset \mathbb{N}\setminus P$. 
\end{remark}

\begin{remark}\label{nierosnace}
Note that if $\sum_{n=1}^{\infty}x_n$ satisfies the assumption of Proposition \ref{wytracaniegranicy}  and the terms tend monotonously to $0$, then for any $\sigma\in S_{\infty}$ we have $\sum_{n=1}^{\infty}(x_n-x_{\sigma(n)})\geq 0$. Indeed, since the terms are non-increasing for every $k\in\mathbb{N}$, we have $\sup\{\sum_{n=1}^{k}x_{\sigma(n)} : \sigma\in S_{\infty}\}=\sum_{n=1}^{k}x_n$. Thus, for every $k\in\mathbb{N}$ and $\sigma\in S_{\infty}$ we get $\sum_{n=1}^{k}x_n-\sum_{n=1}^{k}x_{\sigma(n)}\geq 0$. Hence by  Proposition \ref{wytracaniegranicy} we get the equality $\{\sum_{n=1}^{\infty}(x_n-x_{\sigma(n)}):\sigma\in S_{\infty}\}=[0,\infty)$.
\end{remark}
Note that the additional assumption of monotonous convergence to $0$ for the terms of series is not just the case of taking subseries. Indeed the following example shows that there exists a divergent series $\sum_{n=1}^{\infty}x_n$ of positive terms with $\lim_{n\to\infty}x_n=0$ such that each of its subseries with non-increasing terms is convergent.
\begin{example}
Let $(x_n)=(\frac{1}{2}, \frac{1}{4},\frac{1}{3},\frac{1}{8},\frac{1}{7},\frac{1}{6},\frac{1}{5},\frac{1}{16},\frac{1}{15},\frac{1}{14},\frac{1}{13},\frac{1}{12},\frac{1}{11},\frac{1}{10},\frac{1}{9},\frac{1}{32},\ldots)$. 
\\Hence $\sum_{n=1}^{\infty}x_n=\sum_{n=1}^{\infty}\frac{1}{n}=\infty$. Moreover for any subseries $\sum_{n=1}^{\infty}x_{k_n}$ of non-increasing terms we have $\sum_{n=1}^{\infty}x_{k_n}\leq\frac{1}{2}+\frac{1}{3}+\frac{1}{5}+\frac{1}{9}+\frac{1}{17}+\ldots=\sum_{n=0}^{\infty}\frac{1}{2^n+1}<\infty$.
\end{example}
\begin{corollary}
Fix $a\leq 0$ . Then there exists a divergent series of positive terms $\sum_{n=1}^{\infty}x_n$ such that  $\{\sum_{n=1}^{\infty}(x_n-x_{\sigma(n)}):\sigma\in S_{\infty}\}=[a,\infty)$.
\begin{proof}
We need to construct a series $\sum_{n=1}^{\infty}x_n$, which satisfies two conditions: 
\begin{itemize}
\item[(i)] for every $\sigma\in S_{\infty}$  we have  $\sum_{n=1}^{\infty}(x_n-x_{\sigma(n)})\geq a$.
\item[(ii)] for any $x\geq a$ there exists $\sigma\in S_{\infty}$ such that $\sum_{n=1}^{\infty}(x_n-x_{\sigma(n)})=x$; 
\end{itemize}
Let $\sum_{n=1}^{\infty}y_n$ be a divergent series of positive, non-increasing terms. By Proposition \ref{wytracaniegranicy} let $\tau\in S_{\infty}$ be such that $\sum_{n=1}^{\infty}(y_n-y_{\tau(n)})=-a$. Let us consider $\sum_{n=1}^{\infty}x_n$ with $x_n=y_{\tau(n)}$ for each $n\in\mathbb{N}$. By Remark \ref{nierosnace} we know that for any $\sigma\in S_{\infty}$ we have $\sum_{n=1}^{\infty}(y_n-x_{\sigma(n)})\geq 0$, so  $\sum_{n=1}^{\infty}(x_n-x_{\sigma(n)})=\sum_{n=1}^{\infty}(x_n-y_n+y_n-x_{\sigma(n)})\geq a$. Hence we obtain (i).
\\Fix $x=a+b$ for some $b\geq 0$. By Proposition \ref{wytracaniegranicy}, we can find $\pi\in  S_{\infty}$ such that  $\sum_{n=1}^{\infty}(y_n-y_{\pi(n)})=b$. Let $\sigma=\tau^{-1}(\pi)$. Thus $\sum_{n=1}^{\infty}(x_n-x_{\sigma(n)})=\sum_{n=1}^{\infty}(x_n-y_n+y_n-x_{\tau^{-1}(\pi)(n)})=\sum_{n=1}^{\infty}(x_n-y_n+y_n-y_{\pi(n)})=a+b$, which gives us (ii).
\end{proof}
\end{corollary}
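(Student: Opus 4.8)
The plan is to reduce the statement to the two conditions (i) and (ii) displayed in the corollary — that every convergent value of $\sum_{n=1}^{\infty}(x_n-x_{\sigma(n)})$ is bounded below by $a$, and that every $x\geq a$ is attained — and then to produce the required series by a single rearrangement of a monotone baseline series. First I would fix a divergent series $\sum_{n=1}^{\infty}y_n$ of positive, non-increasing terms tending to $0$ (the harmonic series will do). By Remark \ref{nierosnace} its difference set is exactly $[0,\infty)$, so morally I only need to shift this set down by $|a|$. Since $-a\geq 0$, Proposition \ref{wytracaniegranicy} supplies a permutation $\tau\in S_{\infty}$ with $\sum_{n=1}^{\infty}(y_n-y_{\tau(n)})=-a$, and I set $x_n=y_{\tau(n)}$. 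As a rearrangement of a divergent series of positive terms, $\sum_{n=1}^{\infty}x_n$ is again positive and divergent, hence a legitimate candidate.

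For the lower bound (i), the key identity is the term-wise splitting $x_n-x_{\sigma(n)}=(x_n-y_n)+(y_n-x_{\sigma(n)})$. The first series $\sum_{n=1}^{\infty}(x_n-y_n)=-\sum_{n=1}^{\infty}(y_n-y_{\tau(n)})$ converges to $a$ by the choice of $\tau$. Writing $x_{\sigma(n)}=y_{\tau(\sigma(n))}$, the second series is $\sum_{n=1}^{\infty}(y_n-y_{\tau\sigma(n)})$, a difference of $\sum y_n$ under the permutation $\tau\circ\sigma$; its partial sums are non-negative by the monotonicity argument of Remark \ref{nierosnace}, so whenever it converges its value is $\geq 0$. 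Hence for every $\sigma$ for which $\sum(x_n-x_{\sigma(n)})$ converges, it equals $a$ plus a non-negative number, giving (i).

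For surjectivity (ii), fix a target $a+b$ with $b\geq 0$. Proposition \ref{wytracaniegranicy} yields $\pi\in S_{\infty}$ with $\sum_{n=1}^{\infty}(y_n-y_{\pi(n)})=b$, and I take $\sigma=\tau^{-1}\circ\pi$, so that $x_{\sigma(n)}=y_{\tau(\sigma(n))}=y_{\pi(n)}$. The same splitting then gives $\sum_{n=1}^{\infty}(x_n-x_{\sigma(n)})=a+\sum_{n=1}^{\infty}(y_n-y_{\pi(n)})=a+b$, with both pieces convergent, so the value is realized.

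The step I expect to require the most care is the regrouping of the infinite series in (i) and (ii): the equality $\sum(x_n-x_{\sigma(n)})=\sum(x_n-y_n)+\sum(y_n-x_{\sigma(n)})$ is legitimate only once one knows the two summands converge separately, and one must be precise that the difference set ranges over those $\sigma$ for which the series converges. Convergence of $\sum(x_n-y_n)$ to $a$ is built in by construction, and convergence of $\sum(y_n-x_{\sigma(n)})$ then follows by subtraction; the sign comes from Remark \ref{nierosnace}. Everything else — positivity, divergence, and the bookkeeping $x_{\sigma(n)}=y_{\tau\sigma(n)}$ — is routine.
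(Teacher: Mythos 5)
Your proposal is correct and follows essentially the same route as the paper: the same baseline series of positive non-increasing terms, the same shift $x_n=y_{\tau(n)}$ with $\sum(y_n-y_{\tau(n)})=-a$ from Proposition \ref{wytracaniegranicy}, the same splitting $x_n-x_{\sigma(n)}=(x_n-y_n)+(y_n-x_{\sigma(n)})$ for the lower bound via Remark \ref{nierosnace}, and the same choice $\sigma=\tau^{-1}\circ\pi$ for surjectivity. If anything, you are slightly more careful than the paper, since you explicitly require $y_n\to 0$ (needed to invoke Proposition \ref{wytracaniegranicy}) and justify the regrouping of the two series by noting that convergence of $\sum(y_n-x_{\sigma(n)})$ follows by subtraction.
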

%\begin{proposition}\label{domkniety}
%Let $\sum_{n=1}^{\infty}x_n$ be a divergent series of positive terms such that $\lim_{n\to\infty}x_n=0$ and let $S=\{\sum_{n=1}^{\infty}(x_n-x_{\sigma(n)}):\sigma\in S_{\infty}\}$. Then $S$ is closed.
%\begin{proof}
%\end{proof}
%\end{proposition}
\begin{proposition}\label{polprosta}
Let $\sum_{n=1}^{\infty}x_n$ be a divergent series of positive terms such that $\lim_{n\to\infty}x_n=0$ and let $S=\{\sum_{n=1}^{\infty}(x_n-x_{\sigma(n)}):\sigma\in S_{\infty}\}$. If $b\in S$, then $[b,\infty)\subset S$.
\begin{proof}
Let $b\in S$, $b=\sum_{n=1}^{\infty}(x_n-x_{\sigma(n)})$.  Fix $y\in[b,\infty)$ and denote $x=y-b\geq 0$. From Proposition \ref{wytracaniegranicy} applied to  $\sum_{n=1}^{\infty}x_{\sigma(n)}$ one can find $\tau\in S_{\infty}$ such that $\sum_{n=1}^{\infty}(x_{\sigma(n)}-x_{\tau(\sigma(n))})=x$. Hence 
 $$\sum_{n=1}^{\infty}(x_n-x_{\tau(\sigma(n))})=\sum_{n=1}^{\infty}(x_n-x_{\sigma(n)})+\sum_{n=1}^{\infty}(x_{\sigma(n)}-x_{\tau(\sigma(n))})=b+x=y$$
\end{proof}
\end{proposition}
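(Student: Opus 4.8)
The plan is to use Proposition \ref{wytracaniegranicy} as a one-sided ``shift'' device: it guarantees that, starting from \emph{any} rearrangement of $\sum_{n=1}^{\infty}x_n$, we can further rearrange so as to increase the value $\sum_{n=1}^{\infty}(x_n-x_{\sigma(n)})$ by an arbitrary nonnegative amount. Since membership $b\in S$ is witnessed by a concrete permutation, I would fix that permutation and then climb from $b$ to any prescribed target $y\geq b$, the gap being exactly a nonnegative number to which Proposition \ref{wytracaniegranicy} can be applied.

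Concretely, I would fix $b\in S$ together with a permutation $\sigma\in S_{\infty}$ realizing it, so that $b=\sum_{n=1}^{\infty}(x_n-x_{\sigma(n)})$, and fix $y\in[b,\infty)$, putting $x:=y-b\geq 0$. The crucial observation is that the rearranged series $\sum_{n=1}^{\infty}x_{\sigma(n)}$ again satisfies the hypotheses of Proposition \ref{wytracaniegranicy}: its terms are the same positive numbers listed in a different order, so they are positive, the series still diverges (a sum of positive terms equals $+\infty$ regardless of order), and $\lim_{n\to\infty}x_{\sigma(n)}=0$ because the whole sequence tends to $0$. Applying Proposition \ref{wytracaniegranicy} to $\sum_{n=1}^{\infty}x_{\sigma(n)}$ with target $x\geq 0$ then yields $\tau\in S_{\infty}$ with $\sum_{n=1}^{\infty}(x_{\sigma(n)}-x_{\tau(\sigma(n))})=x$.

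It remains to compose the two permutations. Note that $\tau\circ\sigma\in S_{\infty}$, and for every $n$ we have the termwise identity $x_n-x_{\tau(\sigma(n))}=(x_n-x_{\sigma(n)})+(x_{\sigma(n)}-x_{\tau(\sigma(n))})$. Summing the first $N$ terms and letting $N\to\infty$, both partial-sum sequences on the right converge, to $b$ and to $x$ respectively by the choice of $\sigma$ and $\tau$; hence the left-hand partial sums converge to $b+x=y$. This exhibits $y=\sum_{n=1}^{\infty}(x_n-x_{\tau(\sigma(n))})\in S$, and as $y\in[b,\infty)$ was arbitrary we conclude $[b,\infty)\subset S$.

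The only point requiring care is this last step: since none of the three series converges absolutely, one cannot invoke a blanket linearity of summation but must argue at the level of partial sums, using that both summands on the right are already known to converge. Once that is noticed the argument is short, and the entire substance is carried by Proposition \ref{wytracaniegranicy}, which furnishes exactly the upward reachability forcing $S$ to be a ray rather than an arbitrary set. (In particular, since Proposition \ref{wytracaniegranicy} already gives $[0,\infty)\subset S$, the present statement has real content only for witnesses $b<0$.)
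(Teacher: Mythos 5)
Your proof is correct and follows essentially the same route as the paper: apply Proposition \ref{wytracaniegranicy} to the rearranged series $\sum_{n=1}^{\infty}x_{\sigma(n)}$ to obtain $\tau$, then compose permutations and add the two convergent series of differences to reach $y=b+x$. Your extra care in checking the hypotheses for the rearranged series and in justifying the additivity at the level of partial sums only makes explicit what the paper leaves implicit.
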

\begin{proposition} \label{prosta}
There exists a divergent series of positive terms $\sum_{n=1}^{\infty}y_n$ such that $\lim_{n\to\infty}y_n=0$, for which $S=\{\sum_{n=1}^{\infty}(y_n-y_{\tau(n)}):\tau\in S_{\infty}\}=\mathbb{R}$.
\begin{proof}
Let $\sum_{n=1}^{\infty}x_n$ be a divergent series of positive terms such that $\lim_{n\to\infty}x_n=0$.
Let $A_1\subset\mathbb{N}$ be such that $\sum_{n\in A_1}x_n=\infty=\sum_{n\in \mathbb{N}\setminus A_1}x_n$. By Proposition \ref{wytracaniegranicy} let $\sigma_1\in S_{\infty}$ with $supp(\sigma_1)=A_1$ be such that $\sum_{n=1}^{\infty}(x_n-x_{\sigma_1(n)})=1$. We construct inductively a sequence $(A_k)$ of subsets of $\mathbb{N}$  such that $A_{k+1}\subset\mathbb{N}\setminus A_k$ and  $\sum_{n\in A_k}x_n=\infty=\sum_{n\in\mathbb{N}\setminus\cup_{p=1}^{k}A_p}$
for each $k\in\mathbb{N}$ and a sequence $(\sigma_k)\subset S_{\infty}$ with $supp(\sigma_k)=A_k$ such that $\sum_{n=1}^{\infty}(x_n-x_{\sigma_k(n)})=\frac{1}{k}$ for each $k\in\mathbb{N}$. Let $\sigma\in S_{\infty}$ be given by $\sigma(n)=\sigma_k(n)$ iff $n\in A_k$ and $\sigma(n)=n$ for $n\in\mathbb{N}\setminus\bigcup_{k=1}^{\infty}A_k$. Define $y_n=x_{\sigma(n)}$.
Fix $y<0$ such that $y=-\sum_{k\in A}\frac{1}{k}$ for a finite set of indices $A$. Let us define $\tau\in S_{\infty}$ by the formula $\tau(n)=\sigma^{-1}(n)$ for $n\in \bigcup_{k\in A}A_k$ and $\tau(n)=n$  for $n\in \mathbb{N}\setminus\bigcup_{k\in A}A_k$. Hence 
$$\sum_{n=1}^{\infty}(y_n-y_{\tau(n)})=\sum_{n\in \bigcup_{k\in A}A_k}(y_n-y_{\tau(n)})=\sum_{k\in A}\sum_{n\in A_k}(y_n-y_{\tau(n)})=\sum_{k\in A}\sum_{n\in A_k}(x_{\sigma_k(n)}-x_{n})=-\sum_{k\in A}\frac{1}{k}=y$$
Hence $S\supset \{-\sum_{n\in A}\frac{1}{n} : \vert A\vert<\infty\}$. 
%Since $\overline{\{-\sum_{k\in A}\frac{1}{k} : \vert A\vert<\infty\}}=(-\infty,0]$, 
%by Proposition \ref{polprosta}, we obtain $S\supset(-\infty,0]$. By Proposition \ref{wytracaniegranicy} we get $S\supset[0,\infty)$. 
Fix $z\in\mathbb{R}$. Then there exists $r\in\mathbb{N}$ such that $z\geq-\sum_{n=1}^{r}\frac{1}{n}$. Since $-\sum_{n=1}^{r}\frac{1}{n}\in S$, by Proposition \ref{polprosta}, we obtain that $z\in S$. Hence $S=\mathbb{R}$.
\end{proof}
\end{proposition}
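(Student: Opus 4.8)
The plan is to exploit the two structural facts already established. By Proposition \ref{wytracaniegranicy} (or simply by taking $\tau=\mathrm{id}$, which gives $0\in S$) together with Proposition \ref{polprosta}, the set $S$ of any admissible series is automatically upward closed and contains $[0,\infty)$. Hence the entire problem reduces to engineering a series for which $S$ also contains arbitrarily large negative numbers: once $-c\in S$ for $c$ as large as we please, Proposition \ref{polprosta} sweeps out $[-c,\infty)$ and forces $S=\mathbb{R}$.

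To produce negative values I would start from an arbitrary divergent series $\sum x_n$ of positive terms with $x_n\to 0$ and split the index set into infinitely many pairwise disjoint blocks $A_1,A_2,\dots$, each carrying a divergent subseries $\sum_{n\in A_k}x_n=\infty$ while leaving a divergent tail at every stage. On each block the subsequence $(x_n)_{n\in A_k}$ again satisfies the hypotheses of Proposition \ref{wytracaniegranicy}, so there is a permutation $\sigma_k$ of $A_k$ (extended by the identity off $A_k$) with $\sum_{n\in A_k}(x_n-x_{\sigma_k(n)})=\tfrac1k$. Gluing these together into a single $\sigma$ and setting $y_n=x_{\sigma(n)}$ yields a series which is still positive, still divergent, and still null — the last point because a bijective rearrangement of a null sequence stays null, since for each $\varepsilon>0$ the preimage under $\sigma$ of the finite set $\{n:x_n\ge\varepsilon\}$ is again finite. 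Thus $(y_n)$ is admissible.

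The key computation is the inverse trick. For a finite $A\subset\mathbb{N}$ let $\tau$ agree with $\sigma^{-1}$ on $\bigcup_{k\in A}A_k$ and be the identity elsewhere. Since each $\sigma_k$ permutes $A_k$, the map $\tau$ is a genuine permutation of $\mathbb{N}$, and on each block $A_k$ one has $y_{\tau(n)}=x_{\sigma(\sigma^{-1}(n))}=x_n$, so that $\sum_{n\in A_k}(y_n-y_{\tau(n)})=\sum_{n\in A_k}(x_{\sigma_k(n)}-x_n)=-\tfrac1k$. Summing over $k\in A$ gives $\sum_{n=1}^{\infty}(y_n-y_{\tau(n)})=-\sum_{k\in A}\tfrac1k$, so every partial harmonic sum $-\sum_{k=1}^{r}\tfrac1k$ lies in $S$.

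Finally I would invoke the divergence of the harmonic series: given any $z\in\mathbb{R}$, choose $r$ with $-\sum_{k=1}^{r}\tfrac1k\le z$; since this point lies in $S$, Proposition \ref{polprosta} gives $[-\sum_{k=1}^{r}\tfrac1k,\infty)\subset S$ and hence $z\in S$, whence $S=\mathbb{R}$. I expect the only genuinely delicate point to be the block construction — simultaneously guaranteeing that each $A_k$ carries a divergent subseries and that infinitely many such disjoint blocks can be peeled off while the complement remains divergent at every step — since everything else is bookkeeping about permutations combined with the two propositions already proved.
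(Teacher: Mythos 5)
Your proposal is correct and follows essentially the same route as the paper's own proof: the same decomposition of $\mathbb{N}$ into infinitely many disjoint blocks $A_k$ carrying divergent subseries, the same gluing of block permutations $\sigma_k$ into $\sigma$ with $y_n=x_{\sigma(n)}$, the same inverse trick producing $-\sum_{k\in A}\frac{1}{k}\in S$, and the same final appeal to Proposition \ref{polprosta} and the divergence of the harmonic series. You even supply details the paper leaves implicit (that $(y_n)$ remains null under rearrangement, and that $\tau$ is a genuine permutation), so nothing further is needed.
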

\begin{corollary}
Let $\sum_{n=1}^{\infty}x_n$ be a divergent series of positive terms such that $\lim_{n\to\infty}x_n=0$. Then the set $S=\{\sum_{n=1}^{\infty}(x_n-x_{\sigma(n)}):\sigma\in S_{\infty}\}$ is either $\mathbb{R}$ or a halfline, bounded from below. 
\begin{proof}
Combine Propositions \ref{polprosta} and \ref{prosta}.
\end{proof}
\end{corollary}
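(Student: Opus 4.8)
The plan is to reduce the statement to a simple order-theoretic observation, since essentially all of the analytic content already lives in Propositions \ref{polprosta} and \ref{wytracaniegranicy}. Proposition \ref{polprosta} says precisely that $S$ is upward closed: whenever $b\in S$ one has $[b,\infty)\subseteq S$. Granting this, the only thing left is to check that a nonempty upward closed subset of $\mathbb{R}$ is automatically either all of $\mathbb{R}$ or a halfline bounded from below, and to verify that $S$ is nonempty in the first place.

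First I would dispose of nonemptiness: taking $\sigma=\mathrm{id}$ gives $\sum_{n=1}^{\infty}(x_n-x_{\sigma(n)})=0$, so $0\in S$, and in fact Proposition \ref{wytracaniegranicy} already delivers the stronger inclusion $[0,\infty)\subseteq S$. Hence $\inf S$ is a well-defined element of $[-\infty,0]$, and I would then split into two cases. If $S$ is bounded from below, put $c=\inf S\in\mathbb{R}$; upward closedness forces $(c,\infty)\subseteq S$ (given $y>c$, choose $b\in S$ with $b<y$ and apply Proposition \ref{polprosta}), while nothing below $c$ can lie in $S$, so $S$ equals $[c,\infty)$ or $(c,\infty)$ according as the infimum is attained or not --- in either case a halfline bounded from below. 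If instead $S$ is unbounded from below, then for every $y\in\mathbb{R}$ there is some $b\in S$ with $b\leq y$, whence $y\in[b,\infty)\subseteq S$, and therefore $S=\mathbb{R}$.

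I do not expect a genuine obstacle here: the work has been carried out in the preceding propositions, and what remains is only the elementary dichotomy above. The one point deserving care is that the phrase ``halfline bounded from below'' must be read so as to cover both the closed halfline $[c,\infty)$ and the open halfline $(c,\infty)$, since upward closedness alone does not decide whether the endpoint $c=\inf S$ belongs to $S$. Finally, although the classification itself rests only on Proposition \ref{polprosta}, I would remark that the dichotomy is not vacuous: Proposition \ref{prosta} exhibits a series realizing $S=\mathbb{R}$, while the earlier corollary (with parameter $a\leq 0$) realizes the halfline $S=[a,\infty)$, so both alternatives genuinely occur.
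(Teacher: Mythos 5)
Your proposal is correct and is essentially the paper's own argument: the paper's proof is just ``combine Propositions \ref{polprosta} and \ref{prosta}'', and your write-up supplies exactly the intended details --- upward closedness from Proposition \ref{polprosta}, nonemptiness via $\sigma=\mathrm{id}$, and the elementary dichotomy for nonempty upward closed subsets of $\mathbb{R}$. You also rightly observe that Proposition \ref{prosta} is not logically needed for the classification itself but only shows the alternative $S=\mathbb{R}$ is actually realized, and that ``halfline'' must be read as allowing either $[c,\infty)$ or $(c,\infty)$.
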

\begin{theorem}\label{punktyprostepolproste}
Let $\sum_{n=1}^{\infty}x_n$ be a conditionally convergent series of reals and $I$ be an ideal. Then:
\begin{enumerate}
\item $SR_{I}(x_n)=\{\sum_{n=1}^{\infty}x_n\}$ if and only if for every $A\in I$ we have $\sum_{n\in A}\vert x_n\vert<\infty$;
\item $SR_{I}(x_n)=\mathbb{R}$  if there exists $A\in I$ such that $\sum_{n\in A}x_n^{+}=\sum_{n\in A}x_n^{-}=\infty$, where $x^{+}=\max\{x,0\}$,  $x^{-}=\max\{-x,0\}$ for every $x\in\mathbb{R}$;
%\\Moreover if $(\vert x_n\vert)_{n=1}^{\infty}$ tends monotonously to $0$ then: \underline{możliwe, że można to zał. opuścić}
\item $SR_{I}(x_n)\supset(-\infty,\sum_{n=1}^{\infty}x_n]$ if there exists $A\in I$ such that $\sum_{n\in A}x_n^{+}=\infty$ and for every $A\in I$ such that $\sum_{n\in A}x_n^{+}=\infty$ we have $\sum_{n\in A}x_n^{-}<\infty$;
\item  $SR_{I}(x_n)\supset[\sum_{n=1}^{\infty}x_n,\infty)$ if there exists $B\in I$ such that $\sum_{n\in B}x_n^{-}=\infty$ and for every $B\in I$ such that $\sum_{n\in B}x_n^{-}=\infty$ we have $\sum_{n\in B}x_n^{+}<\infty$.
\end{enumerate}
Note that for each conditionally convergent series exactly one of the four conditions imposed on the series above holds. Indeed if $(1)$ holds then neither $(2)$ nor $(3)$ nor $(4)$ hold. If $(3)$ or $(4)$ hold then $(2)$ does not hold and vice-versa. Moreover if we suppose that $(3)$ and $(4)$ are both satisfied and $A\in I$ and $B\in I$ are such that $\sum_{n\in A}x_n^{+}=\infty$ and $\sum_{n\in B}x_n^{-}=\infty$, then $A\cup B\in I$ and $\sum_{n\in A\cup B}x_n^{+}=\sum_{n\in A\cup B}x_n^{-}=\infty$, so $(2)$ holds, which gives us a contradiction.
\begin{proof}
Proofs of $(1)$ and $(2)$ are obvious. 
\\Let us assume that a conditionally convergent series $\sum_{n=1}^{\infty}x_n$ satisfies $(3)$. Let $A\in I$ be such that $\sum_{n\in A}x_n^{+}=\infty$. By simply taking the subset of $A$ we may assume that the series  $\sum_{n\in A}x_n^{+}$ has positive terms, that is $x_n^{+}=x_n$. Fix $y\in (-\infty,\sum_{n=1}^{\infty}x_n]$. We use Proposition \ref{wytracaniegranicy} for $x=\sum_{n=1}^{\infty}x_n-y\geq 0$. Let $\sigma$ be such that $\sum_{n\in A}(x_n-x_{\sigma(n)})=x$. Define $\tau(n)=\sigma(n)$ for $n\in A$ and  $\tau(n)=n$ for $n\in\mathbb{N}\setminus A$. Thus $\sum_{n=1}^{\infty}x_{\tau(n)}=\sum_{n=1}^{\infty}x_n+y-\sum_{n=1}^{\infty}x_n=y$.
\\The proof of $(4)$ is very simillar to $(3)$.
\end{proof}
\end{theorem}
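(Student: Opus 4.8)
The plan is to isolate first the structural observation that drives all four parts. If $\sigma\in S_{\infty}$ has support $W=\operatorname{supp}(\sigma)\in I$, then $\sigma$ fixes $\mathbb{N}\setminus W$ pointwise, and being a bijection it must permute $W$ onto itself, so $\sigma(W)=W$; thus $\sum_{n=1}^{\infty}x_{\sigma(n)}$ merely reshuffles the terms indexed by $W$, and for every $N$ one has the deficit identity $\sum_{n=1}^{N}x_{n}-\sum_{n=1}^{N}x_{\sigma(n)}=\sum_{n\leq N,\,n\in W}(x_{n}-x_{\sigma(n)})$. I would record this once and reuse it.

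For the ``$\Leftarrow$'' direction of $(1)$, if every $A\in I$ yields $\sum_{n\in A}|x_{n}|<\infty$, then in particular $\sum_{n\in W}|x_{n}|<\infty$, so reshuffling inside $W$ cannot change the sum and $\sum x_{\sigma(n)}=\sum x_{n}$; hence $SR_{I}(x_n)$ is a singleton. The ``$\Rightarrow$'' direction I would defer and obtain for free from the trichotomy below.

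Next I would prove $(3)$. Given $A\in I$ with $\sum_{n\in A}x_{n}^{+}=\infty$, replace $A$ by $A^{+}=\{n\in A:x_{n}>0\}$, still in $I$ since $I$ is an ideal; on $A^{+}$ the subseries has positive terms, diverges, and (as part of a convergent series) its terms tend to $0$. Writing $A^{+}=\{a_{1}<a_{2}<\cdots\}$ and $z_{k}=x_{a_{k}}$, fix a target $y\leq s:=\sum x_{n}$, put $x=s-y\geq0$, and use Proposition \ref{wytracaniegranicy} to get $\pi\in S_{\infty}$ with $\sum_{k}(z_{k}-z_{\pi(k)})=x$. Transport $\pi$ to $\tau\in S_{\infty}$ supported in $A^{+}$ by $\tau(a_{k})=a_{\pi(k)}$ and $\tau=\mathrm{id}$ elsewhere, so $\operatorname{supp}(\tau)\subset A\in I$. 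The deficit identity then gives $\sum_{n=1}^{N}x_{\tau(n)}=\sum_{n=1}^{N}x_{n}-\sum_{a_{k}\leq N}(z_{k}-z_{\pi(k)})\to s-x=y$, so $(-\infty,s]\subset SR_{I}(x_n)$. Part $(4)$ is the mirror image built from $A^{-}=\{n\in A:x_{n}<0\}$, and part $(2)$ follows at once: under its hypothesis one single $A\in I$ makes \emph{both} constructions available with support inside $A$, yielding $(-\infty,s]\cup[s,\infty)=\mathbb{R}$. Finally, the ``$\Rightarrow$'' half of $(1)$ and the trichotomy come cheaply: if $(1)$ fails there is $A\in I$ with $\sum_{n\in A}|x_{n}|=\infty$, hence $\sum_{A}x_{n}^{+}=\infty$ or $\sum_{A}x_{n}^{-}=\infty$, so one of $(2),(3),(4)$ applies and $SR_{I}(x_n)$ is not a point; mutual exclusivity is the bookkeeping already sketched after the statement, since a witness for $(3)$ and one for $(4)$ would union to a witness for $(2)$.

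The one step needing genuine care is the convergence $\sum_{n=1}^{N}x_{\tau(n)}\to y$. Because $\sum_{n\in A^{+}}x_{n}$ diverges, one cannot split the rearranged series into convergent ``inside $A$'' and ``outside $A$'' pieces; instead one must track the partial sums of the full series and invoke that the cumulative deficit $\sum_{a_{k}\leq N}(z_{k}-z_{\pi(k)})$ converges to $x$, which is exactly what Proposition \ref{wytracaniegranicy} delivers. Everything else is routine verification.
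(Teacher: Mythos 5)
Your proof is correct and follows essentially the same route as the paper: pass to the positive-term subset $A^{+}\in I$, apply Proposition \ref{wytracaniegranicy} to the subseries $(x_{a_k})$, transport the resulting permutation to one supported in $A$, and read off the limit of the partial sums; parts $(1)$, $(2)$ and the exclusivity bookkeeping are then derived exactly as the paper intends where it says they are obvious. Your explicit deficit identity and the indexed transport $\tau(a_k)=a_{\pi(k)}$ merely make rigorous the step the paper compresses into the one-line computation $\sum_{n=1}^{\infty}x_{\tau(n)}=\sum_{n=1}^{\infty}x_n+y-\sum_{n=1}^{\infty}x_n=y$.
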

\begin{remark}
Note that the implication (2) in Theorem \ref{punktyprostepolproste} cannot be reversed. Indeed, by Proposition \ref{prosta} we get that the equality $SR_{I}(x_n)=\mathbb{R}$ can hold when the assumptions of (3) or (4) are satisfied.
\end{remark}

%\begin{problem}
%Does every conditionally convergent series have (X)-property ?
%\end{problem}

%We will apply first two of above conditions. The following proposition is directly implied by above theorem.
%\begin{proposition}
%Let $\sum\limits_{n=1}^{\infty}x_n$ be a conditionally convergent series and $I$ be an ideal, which  cannot be covered by any summable ideal. Then $A_{I}(x_n)=\mathbb{R}$ and $SR_{I}(x_n)=\mathbb{R}$.
%\end{proposition}
 
%Let now $\sum_{n=1}^{\infty}x_n$ be a conditionally convergent series in any Banach space. In \cite{BGM} we proved that $SR(x_n)\subset\overline{A(x_n)}$. By using our notation of ideal achievement sets we can rewritten that fact more precisely, namely $SR(x_n)\subset\overline{A_{Fin}(x_n)}$ and hence $SR(x_n)\subset\overline{A_{I}(x_n)}$ for every $I\supset Fin$. Recall that all of previous inclusions still hold if we replace $SR(x_n)$ with  $A(x_n)$. 

\section{Complexity of ideally supported achievement sets}

Let us start from presenting the following examples.
\begin{example}\label{fsigma}
Let $x_n=\frac{2}{3^n}$ for $n\in\mathbb{N}$ and $I=Fin$. Note that $Fin$ is an $F_{\sigma}$-set, which is not a  $G_{\delta}$-set. Since $f$ is a homeomorphism, we obtain that also $A_{Fin}(x_n)$  is an $F_{\sigma}$-set, which is not a  $G_{\delta}$-set. Moreover for any $J\supset Fin$ we know that $J$ is not a $G_{\delta}$, so $A_{J}(x_n)$ is not a  $G_{\delta}$.
\end{example}
\begin{example}\label{fsigmadelta}
Let $x_n=\frac{2}{3^n}$  for $n\in\mathbb{N}$ and $I=I_d$. In \cite{HL} the authors proved that  $I_d$ is an $F_{\sigma\delta}$-set, which is not a $G_{\delta\sigma}$-set. Hence $A_{I_d}(x_n)$  is an $F_{\sigma\delta}$-set, which is not a  $G_{\delta\sigma}$-set.
\end{example}

\begin{theorem}
Let $I_d$ be a ideal of statistical density zero. Then $A_{I_d}(\frac{1}{2^n})$ is a null subset of $[0,1]=A(\frac{1}{2^n})$.  
\begin{proof}
%Consider $x_n=\frac{1}{2^n}$. Then $A(x_n)=[0,1]$. For every ideal $I$ we have $A_I(x_n)$ is a strict subset of $A(x_n)$. Indeed if $\sum_{n\in A}x_n=1$ then $A=\mathbb{N}$, so $A_I(x_n)\subset [0,1)$. Let us consider the ideal of statistical convergence $I_d=\{A\subset\mathbb{N}: \lim_{n\to\infty} \frac{A\cap\{1,\ldots,n\}}{n}=0\}$. Then $A_{I_d}(x_n)$ is a null subset. 
 Indeed by the Borel's Theorem on Normal Numbers the set $F=\{\sum_{n\in B} \frac{1}{2^n} :  \lim_{n\to\infty} \frac{B\cap\{1,\ldots,n\}}{n}=\frac{1}{2}\}$ has Lebesgue measure $1$, for the proof see \cite{Walters}. Suppose that there exists $x\in F\cap A_{I_d}(x_n)$. 
Then there exists $A\in I_d$ and $B$ with $\lim_{n\to\infty} \frac{B\cap\{1,\ldots,n\}}{n}=\frac{1}{2}$ such that $x=\sum_{n\in A} \frac{1}{2^n}=\sum_{n\in B} \frac{1}{2^n}$. Clearly $B\notin I_d$, so $A\neq B$. Observe that almost every point $x\in [0,1]$ has a unique representation by the set $E$ of those indices $n$, such that $x=\sum_{n\in E} \frac{1}{2^n}$. Hence $x=\frac{m}{2^k}$ for some $k\in\mathbb{N}$ and $m\in\{1,\ldots,2^k\}$. It implies that $A\subset\{1,\ldots,k+1\}$, which gives us the inclusion $B\supset\{k+2,k+3,\ldots\}$. Thus $\lim_{n\to\infty} \frac{B\cap\{1,\ldots,n\}}{n}=1$, which yields a contradiction. Thus $F\cap A_{I_d}(x_n)=\emptyset$. Since $\lambda(F)=1$, we get that  $A_{I_d}(x_n)$ is null. 
%To finish the proof note that for an arbitrary increasing sequence of indices $\{a_1<a_2<a_3<\ldots\}$ its infinite subsequence $\{a_1<a_2<a_4<a_8<\ldots\}\in I_d$, so $I_d$ is a dense ideal.
\end{proof}
\end{theorem}
%\subsection{Cantor-like achievement sets} 

%\begin{example}
%Let $x_n=\frac{1}{10^k}$ for $n\in (10k,10k+10]$ and $k\in\mathbb{N}_{0}$. Hence $\sum\limits_{n=1}^{\infty} x_n=\frac{100}{9}$ and $A(x_n)=[0,\frac{100}{9}]$. Hence $\frac{10}{9}\notin A_{I_{d}}(x_n)$
%\end{example}
%\begin{example}
%Consider $x_n=\frac{1}{n^2}$. Then $A(x_n)=[0,\frac{\pi^2}{6}]$. Then for every ideal $I$ we have  $A_{I}(x_n)=[0,\frac{\pi^2}{6})$.
%\end{example}

Examples \ref{fsigma} and \ref{fsigmadelta} recall that if a series' associated function $f$ is a homeomorphism, then ideal achievement sets are usually of a high Borel class.
Now we will show the opposite of that fact, namely if $f$ is not an injection, then we can have more regular ideal achievement sets. In particular for $I\neq Fin$ we can obtain that $f(I)$  is a compact set up to some finite set, see \ref{brakujacysingleton}, and if $I$ is not maximal then $f(I)$ can even be an open set, see \ref{idealmaksymalnyizbiorotwarty}. 

We have the following inclusions $A_{Fin}(x_n)\subset A_{I}(x_n)\subset A(x_n)$. Now we will study if these inclusions have to be strict or not. The simple observation shows that if $(x_n)\in c_{00}$ then  $A_{Fin}(x_n)= A(x_n)$. 
Moreover if infinitely many of terms of our series are equal to zero and $\{n : x_n\neq 0\}\in I$, then $A_{I}(x_n)=A(x_n)$. %Thus we will consider series which terms are not equal to $0$.
%Denote by $\ell_1^{*}$ the set of all summable sequences with non-zero terms. Recall that $\ell_1^{*}\subset \ell_1\setminus c_{00}$.
\begin{proposition}\label{finprzeliczalny}
Let $I\neq Fin$ be an ideal and $(x_n)\in  \ell_1^{*}$ . Then $A_{Fin}(x_n)$ is a strict subset of $A_{I}(x_n)$.
\begin{proof}
Note that  $A_{Fin}(x_n)=\{\sum_{n\in A} x_n : A\in Fin\}=\{\sum_{n=1}^{k} \varepsilon_n x_n : (\varepsilon_n)_{n=1}^{k}\in\{0,1\}^k, k\in\mathbb{N}\}$ and hence it is countable. Since $(x_n)\in  \ell_1^{*}$ then one can find a subsequence $(m_n)_{n=1}^{\infty}\in I$ such that $\vert x_{m_{n+1}}\vert <\frac{\vert x_{m_{n}}\vert}{2}$ for each $n\in\mathbb{N}$. Note that $\{0,1\}^{\mathbb{N}}\ni (\delta_n)\rightarrow \sum_{n=1}^{\infty} \delta_n x_{m_n}$ is one-to-one, so $A_I(x_{n})$ is uncountable. Hence $A_{Fin}(x_n)\neq A_{I}(x_n)$.
\end{proof}
\end{proposition}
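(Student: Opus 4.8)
The plan is to exploit a cardinality gap between the two sets: I will show that $A_{Fin}(x_n)$ is countable while $A_{I}(x_n)$ is uncountable. Since $Fin\subset I$, the inclusion $A_{Fin}(x_n)\subset A_{I}(x_n)$ holds automatically, and the cardinality discrepancy forces it to be strict.

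First I would record that $A_{Fin}(x_n)=\{\sum_{n\in A}x_n:A\in Fin\}$ is countable, since it is indexed by the countable family $Fin$ of finite subsets of $\mathbb{N}$; equivalently it is $\{\sum_{n=1}^{k}\varepsilon_n x_n:(\varepsilon_n)\in\{0,1\}^{k},\ k\in\mathbb{N}\}$, a countable union of finite sets.

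Next, since $I\neq Fin$ and $I\supset Fin$, there exists an infinite set $M\in I$. As $(x_n)\in\ell_1$ we have $x_n\to 0$, so $|x_m|\to 0$ along $m\in M$ as well; this allows me to extract a subsequence $(m_n)_{n=1}^{\infty}$ of $M$ with $|x_{m_{n+1}}|<\frac{1}{2}|x_{m_n}|$ for every $n\in\mathbb{N}$. Because $I$ is closed under subsets, every subset of $\{m_n:n\in\mathbb{N}\}\subseteq M$ again lies in $I$. Summing the resulting geometric tail gives $\sum_{k>n}|x_{m_k}|<|x_{m_n}|$ for each $n$, i.e.\ the quick-convergence condition, so the associated map $g:\{0,1\}^{\mathbb{N}}\to\mathbb{R}$ defined by $g((\delta_n))=\sum_{n=1}^{\infty}\delta_n x_{m_n}$ is one-to-one.

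Finally I would conclude by the cardinality count. Each $(\delta_n)\in\{0,1\}^{\mathbb{N}}$ corresponds to the set $D=\{m_n:\delta_n=1\}\subseteq M$, which belongs to $I$, and $g((\delta_n))=\sum_{n\in D}x_n\in A_{I}(x_n)$. Hence $A_{I}(x_n)$ contains the injective image $g(\{0,1\}^{\mathbb{N}})$, a set of cardinality $\Co$, and is therefore uncountable. Since $A_{Fin}(x_n)$ is countable, the inclusion is proper. The only point genuinely requiring care is the extraction of the rapidly decaying subsequence inside a set from $I\setminus Fin$ and the verification that the condition $|x_{m_{n+1}}|<\frac{1}{2}|x_{m_n}|$ forces injectivity of $g$; both follow easily once one uses $x_n\to 0$ together with the geometric estimate of the tail.
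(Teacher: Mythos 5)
Your proof is correct and follows essentially the same route as the paper: both establish that $A_{Fin}(x_n)$ is countable and then extract a rapidly decaying subsequence $(m_n)\in I$ with $\vert x_{m_{n+1}}\vert<\frac{1}{2}\vert x_{m_n}\vert$, whose injective associated map makes $A_I(x_n)$ uncountable. You merely spell out details the paper leaves implicit (the existence of an infinite set in $I\setminus Fin$ and the tail estimate forcing injectivity), which is fine.
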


\begin{proposition}
For every $(x_n)\in  \ell_1^{*}$, there exists an ideal $I\neq Fin$ such that $A_I(x_n)$ is meager and null.
\end{proposition}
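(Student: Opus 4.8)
The plan is to realize $I$ as the ideal generated by a sufficiently sparse infinite set $B$ together with $Fin$, and then to show that $A_I(x_n)$ splits as a countable union of translates of one null compact set; nullity and meagerness of $A_I(x_n)$ will follow automatically.

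First I would choose the set $B$. Since $(x_n)\in\ell_1^{*}$ forces $x_n\to 0$, I can pick inductively an increasing sequence $b_1<b_2<\cdots$ with $|x_{b_k}|<4^{-k}$ while also keeping $\mathbb{N}\setminus B$ infinite, where $B=\{b_k:k\in\mathbb{N}\}$; these two requirements are compatible because $x_n\to 0$ leaves infinitely many indices with $|x_n|$ below any prescribed threshold, so at each step I may select $b_{k+1}$ small and skip infinitely many indices. I then set $I=B+Fin$. As $B$ is infinite, $B\in I\setminus Fin$, so $I\neq Fin$; as $\mathbb{N}\setminus B$ is infinite, $\mathbb{N}\notin I$, so $I$ is a proper ideal containing $Fin$.

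Next I would describe $A_I(x_n)$ explicitly. A set $A$ lies in $I$ precisely when $A\setminus B$ is finite, so writing $D=A\cap B$ and $F=A\setminus B$ we obtain $\sum_{n\in A}x_n=\sum_{n\in D}x_n+\sum_{n\in F}x_n$, where $D$ ranges over all subsets of $B$ and $F$ over all finite subsets of $\mathbb{N}\setminus B$. Hence $A_I(x_n)=C+Q$, where $C=A((x_n)_{n\in B})$ is the achievement set of the subseries indexed by $B$ (compact, since a subseries of an absolutely convergent series is absolutely convergent, cf.\ Kakeya's theorem in the introduction) and $Q=\{\sum_{n\in F}x_n:F\subseteq\mathbb{N}\setminus B,\ |F|<\infty\}$ is countable. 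Thus $A_I(x_n)=\bigcup_{q\in Q}(C+q)$ is a countable union of translates of $C$.

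It then suffices to show that $C$ is null and nowhere dense. For the measure estimate, fixing the first $k$ coordinates $\varepsilon_1,\dots,\varepsilon_k\in\{0,1\}$ confines the remaining sum to an interval of length $r_k:=\sum_{j>k}|x_{b_j}|$, so $C$ is covered by $2^k$ intervals of total length $2^k r_k$; since $|x_{b_j}|<4^{-j}$ gives $r_k<4^{-k}/3$ and hence $2^k r_k<2^{-k}/3\to 0$, we conclude $\lambda(C)=0$. Being also compact, $C$ has empty interior and is therefore nowhere dense, and translation preserves both nullity and nowhere-density. A countable union of null sets is null and a countable union of nowhere dense sets is meager, so $A_I(x_n)$ is both null and meager. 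The one delicate point is precisely this quantitative choice of $B$: the naive rapidly-decreasing condition $|x_{b_{k+1}}|<\tfrac12|x_{b_k}|$ used in Proposition \ref{finprzeliczalny} only keeps $2^k r_k$ bounded rather than tending to $0$, so the selected terms must be forced to decay fast enough (e.g.\ $|x_{b_k}|<4^{-k}$) to make the covering length vanish; everything else is routine.
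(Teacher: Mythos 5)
Your proof is correct and takes essentially the same route as the paper: there too one sets $I=B+Fin$ for a sufficiently fast-decaying subsequence $B=\{m_n\}$ (the paper uses the ratio condition $\vert x_{m_{n+1}}\vert<\vert x_{m_n}\vert/3$ in place of your bound $\vert x_{b_k}\vert<4^{-k}$), views $A_I(x_n)$ inside a countable union of translates of the compact achievement set of the subseries, and deduces meagerness and nullity exactly as you do. The only differences are cosmetic: the paper cites the formula $\lambda(A(x_{m_n}))=\lim_{k\to\infty}2^k r_k$ from the literature where you give the elementary $2^k$-interval covering bound, and your explicit care that $\mathbb{N}\setminus B$ remain infinite (so that $I$ is a proper ideal) is a detail the paper leaves implicit.
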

\begin{proof}
 Let $(x_n)\in  \ell_1^{*}$. Then for $I=B+Fin$, where $B=\{m_n: n\in\mathbb{N}\}$ is defined as in \ref{finprzeliczalny}, we get that $A_{I}(x_n)$ is a subset of an algebraic sum of $A_{Fin}(x_n)$ and a set which is homeomorphic to the Cantor set. Since $A_{Fin}(x_n)$ is countable we get that $A_{I}(x_n)$ is meager.
 Moreover if we take $(m_n)_{n=1}^{\infty}\in I$ with $\vert x_{m_{n+1}}\vert <\frac{\vert x_{m_{n}}\vert}{3}$ for each $n\in\mathbb{N}$, then 
%since $\sum\limits_{n=1}^{\infty}  x_{m_n}$ is quickly convergent 
 by the formula given in \cite{BFPW} we have $\mu(A(x_{m_n}))=\lim_{k\to\infty} 2^{k}r_k$. Note that $r_k= \sum_{n=k+1}^{\infty}  x_{m_n}\leq \sum_{n=k+1}^{\infty}  \vert x_{m_n}\vert \leq  \vert x_{m_{k+1}}\vert\sum_{n=0}^{\infty}  3^{-n}\leq \frac{3}{2}\vert x_{m_1}\vert 3^{-k}$. Hence $\mu(A(x_{m_n}))=0$. Thus $\mu(A_{I}(x_n))=0$.
\end{proof}
\begin{proposition}\label{idealowywlasciwy}
Let $I$ be an ideal and $(x_n)\in  \ell_1^{*}$. Then $A_{I}(x_n)$ is a strict subset of $A(x_n)$.
\begin{proof}
Let $A=\{n : x_n>0\}$, then $\mathbb{N}\setminus A=\{n : x_n<0\}$. Put $x=\sum_{n\in A}x_n,y=\sum_{n\in \mathbb{N}\setminus A}x_n$. Then $x,y$ are obtained in the unique way presented above and $x,y\in A(x_n)$. Suppose that $x,y\in A_{I}(x_n)$. Thus $A\in I$ and $\mathbb{N}\setminus A\in I$, which gives us contradiction. 
\end{proof}
\end{proposition}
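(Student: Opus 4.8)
The plan is to exhibit an explicit element of $A(x_n)$ that cannot lie in $A_I(x_n)$, exploiting the fact that an ideal $I$ is closed under finite unions and never contains $\mathbb{N}$. The natural candidates are the extreme values of $A(x_n)$. Since $(x_n)\in\ell_1^*$ has no zero terms, I would set $P=\{n:x_n>0\}$ and $N=\{n:x_n<0\}$, so that $P$ and $N$ partition $\mathbb{N}$, and put $M=\sum_{n\in P}x_n$ and $m=\sum_{n\in N}x_n$. Both sums are finite by absolute convergence, $M,m\in A(x_n)$, and in fact $M=\max A(x_n)$ while $m=\min A(x_n)$.

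The crucial step is to show that these extremal values are attained by $P$ and $N$ respectively, and by no other subset. For any $C\subset\mathbb{N}$ I would write $\sum_{n\in C}x_n-M=\sum_{n\in C\setminus P}x_n-\sum_{n\in P\setminus C}x_n$. The first sum ranges over negative terms, hence is $\leq 0$ with equality only if $C\setminus P=\emptyset$; the second ranges over positive terms, hence is $\geq 0$ with equality only if $P\setminus C=\emptyset$. Thus $\sum_{n\in C}x_n=M$ forces $C=P$, and symmetrically $\sum_{n\in C}x_n=m$ forces $C=N$. This is the heart of the argument, and the only place where the hypothesis $x_n\neq 0$ for every $n$ is genuinely used.

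With uniqueness in hand the conclusion is immediate. If both $M\in A_I(x_n)$ and $m\in A_I(x_n)$, then by uniqueness their representing sets $P$ and $N$ would each belong to $I$, whence $\mathbb{N}=P\cup N\in I$, contradicting the definition of an ideal. Therefore at least one of $M,m$ lies in $A(x_n)\setminus A_I(x_n)$, and the inclusion $A_I(x_n)\subset A(x_n)$ is strict.

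I expect the main obstacle to be pinning down the uniqueness claim cleanly, since it is tempting to try the full sum $\sum_{n=1}^{\infty}x_n$ (represented by the forbidden set $\mathbb{N}$) directly. That shortcut fails: once both positive and negative terms are present, the total sum typically admits many representations, so it need not witness strictness. Passing instead to the extremal sums $M$ and $m$ is precisely what makes the representations rigid and lets the ideal axioms close the argument.
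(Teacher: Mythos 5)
Your proof is correct and follows essentially the same route as the paper: both identify the extremal sums over $\{n : x_n>0\}$ and $\{n : x_n<0\}$, note that these representations are unique, and conclude that if both lay in $A_I(x_n)$ then $\mathbb{N}$ would belong to $I$. The only difference is that you spell out the uniqueness computation explicitly, which the paper merely asserts.
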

Note that for a convergent series $\sum_{n=1}^{\infty}x_n$ the set  $A_{Fin}(x_n)$ is a dense, countable subset of $A(x_n)$. Hence $A_{I}(x_n)$ is also a dense subset of $A(x_n)$ for each $I\supset Fin$. 
\begin{theorem}\label{brakujacysingleton}
Let $I$ be an ideal which is not equal to $Fin$. Then there exists a sequence $(x_n)\in  \ell_1^{*}$ such that $A(x_n)\setminus A_I(x_n)$ is a singleton.
\begin{proof}
Without losing generality assume that $x>0$. Let $A=\{a_1<a_2<\ldots\}\in I$ and $a_{0}=0$, $a_{i+1}>a_{i}+1$ for $i\in\mathbb{N}_{0}$. Define $x_{a_{2n-1}}=\frac{x}{2^{n+1}}$ and $x_{a_{2n}}=-\frac{x}{2^{n+1}}$ for $n\in\mathbb{N}$. Moreover let $x_n=\frac{x}{2^{i+2}(a_{i+1}-a_i-1)}$ if $n\in\{a_{i}+1,\ldots,a_{i+1}-1\}$ for every $i\in\mathbb{N}_0$. Note that $(x_n)\in  \ell_1^{*}$ and it satisfies the following equalities $\sum_{k=a_{i}+1}^{a_{i+1}-1}x_k=\frac{x}{2^{i+2}}$ for every $i\in\mathbb{N}_{0}$. By the construction of $(x_n)$ we get $A_{I}(x_n)\supset [-\frac{x}{2},\frac{x}{2}]$. Moreover $A(x_n)=[-\frac{x}{2},x]$. Fix $z\in (\frac{x}{2},x)$. Since $\sum_{n\in\mathbb{N}\setminus A} x_n=\frac{x}{2}$ one can find a finite set $D\subset \mathbb{N}\setminus A$ such that $\frac{x}{2}>\sum_{n\in D} x_n>z-\frac{x}{2}>0$. Let $E\subset A$ be such that $\sum_{n\in E}x_n=z-\sum_{n\in D}x_n$. Put $F=D\cup E$. Since $D\in I$ and $E\in I$ we get that $F\in I$. Note that $z=\sum_{n\in F}x_n$, so $z\in A_{I}(x_n)$. Moreover $x=\sum_{n\in \mathbb{N}\setminus(a_{2n})_{n=1}^{\infty}}x_n$ is obtained in only that way. Since $(a_{2n})_{n=1}^{\infty}\subset (a_{n})_{n=1}^{\infty}$ and $(a_{n})_{n=1}^{\infty}\in I$ we get that $(a_{2n})_{n=1}^{\infty}\in I$. Hence $\mathbb{N}\setminus(a_{2n})_{n=1}^{\infty}\notin I$, so $x\in A(x_n)\setminus A_I(x_n)$.
\end{proof}
\end{theorem}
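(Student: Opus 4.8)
The plan is to build, for a prescribed positive value $x$ (which will be the unique missing point), a sequence supported so that the symmetric ``core'' $[-\frac{x}{2},\frac{x}{2}]$ of the achievement set is produced entirely by index sets lying in $I$, while the maximal value $x$ forces the use of one particular index set whose complement is too large to belong to $I$.

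First I would use the hypothesis $I\neq Fin$ to pick an infinite set in $I$ and, after passing to a subsequence (still a member of $I$), enumerate it as $A=\{a_1<a_2<\dots\}$ with $a_0=0$ and $a_{i+1}>a_i+1$, so that every gap $\{a_i+1,\dots,a_{i+1}-1\}$ is nonempty. On $A$ I place a dyadic sign pattern, $x_{a_{2n-1}}=\frac{x}{2^{n+1}}$ and $x_{a_{2n}}=-\frac{x}{2^{n+1}}$, and on the $i$-th gap I spread equal positive terms summing to $\frac{x}{2^{i+2}}$, i.e. $x_n=\frac{x}{2^{i+2}(a_{i+1}-a_i-1)}$ for $n$ in that gap. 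Every term is nonzero and $\sum_n|x_n|=\frac{3x}{2}<\infty$, so $(x_n)\in\ell_1^{*}$.

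Next I compute the two achievement sets. Restricted to $A$, the positive part $\sum_n\frac{x}{2^{n+1}}$ satisfies the exact-interval (dyadic) identity $\frac{x}{2^{n+1}}=\sum_{k>n}\frac{x}{2^{k+1}}$, so its achievement set is $[0,\frac{x}{2}]$; symmetrically the negative part yields $[-\frac{x}{2},0]$, and choosing the two independently gives $A((x_n)_{n\in A})=[-\frac{x}{2},\frac{x}{2}]$. Since $A\in I$, this already shows $[-\frac{x}{2},\frac{x}{2}]\subset A_I(x_n)$. For the full sequence I would split into its positive and negative parts and verify Kakeya's interval criterion for each (after sorting, every term is at most the sum of the strictly smaller ones); the block sums are tuned so that the tail beyond the $i$-th gap again equals $\frac{x}{2^{i+2}}$, which makes the criterion hold, gives positive part $[0,x]$ and negative part $[-\frac{x}{2},0]$, and hence $A(x_n)=[-\frac{x}{2},x]$. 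I expect this verification --- confirming that the interleaved dyadic and gap terms really do produce the full interval --- to be the main technical obstacle.

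Finally I would argue the difference is exactly $\{x\}$. The value $x$ equals the sum of all positive terms, and since every term is nonzero this is attained only by the index set $S=\{n:x_n>0\}=\mathbb{N}\setminus\{a_{2n}:n\geq1\}$; as $\{a_{2n}\}\subset A\in I$ we have $\{a_{2n}\}\in I$, whence $\mathbb{N}\setminus\{a_{2n}\}\notin I$ and thus $x\notin A_I(x_n)$. Conversely, any $z\in[-\frac{x}{2},x)$ lies in $A_I(x_n)$: for $z\leq\frac{x}{2}$ this is the core already obtained, and for $z\in(\frac{x}{2},x)$ I pick a finite $D\subset\mathbb{N}\setminus A$ with $\sum_{n\in D}x_n\in(z-\frac{x}{2},\frac{x}{2})$ (possible because the finite subsums of the gap terms are dense in $[0,\frac{x}{2}]$) and then $E\subset A$ with $\sum_{n\in E}x_n=z-\sum_{n\in D}x_n\in(0,\frac{x}{2})$; setting $F=D\cup E$ gives $F\in I$ and $\sum_{n\in F}x_n=z$. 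Combining $A(x_n)=[-\frac{x}{2},x]$, the inclusion $[-\frac{x}{2},x)\subset A_I(x_n)$, and $x\notin A_I(x_n)$ yields $A(x_n)\setminus A_I(x_n)=\{x\}$.
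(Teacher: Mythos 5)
Your proposal is correct and is essentially the paper's own proof: the same construction (dyadic signed terms on the set $A\in I$, equal positive terms spread over the gaps with block sums $\frac{x}{2^{i+2}}$), the same argument that $z\in(\frac{x}{2},x)$ is achieved via a finite $D\subset\mathbb{N}\setminus A$ plus $E\subset A$, and the same observation that $x$ is attained only by $\mathbb{N}\setminus\{a_{2n}\}\notin I$. The only difference is that you spell out the Kakeya-type verification of $A(x_n)=[-\frac{x}{2},x]$, which the paper asserts without detail.
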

\begin{remark}
Note that by Proposition \ref{idealowywlasciwy}, the point $x$ from Theorem \ref{brakujacysingleton}  has to be either $\max A(x_n)$ or $\min A(x_n)$. The proof of Theorem \ref{brakujacysingleton} shows that it is possible to obtain $A(x_n)\setminus A_I(x_n)=\max A(x_n)$.
\end{remark}
\begin{theorem}\label{idealmaksymalnyizbiorotwarty}
Let $I$ be an ideal which covers and is not equal to $Fin$.  The following assertions are equivalent:
\begin{enumerate}
\item there exists a sequence $(x_n)\in \ell_1^{*}$ such that $A_I(x_n)$ is open 
\item $I$ is not maximal 
\end{enumerate}
\begin{proof}
$\Rightarrow$. Suppose that $(x_n)\in  \ell_1^{*}$ is such that $A_I(x_n)$ is open and $I$ is maximal. Let $A=\{n : x_n>0\}$. If $A=\emptyset$ or $A=\mathbb{N}$ then $0\in A_I(x_n)$  and $A_I(x_n)\cap (0,\infty)=\emptyset$ or $A_I(x_n)\cap (-\infty,0)=\emptyset$ respectively. Hence  $A_I(x_n)$ is not open. Assume that $\emptyset\neq A\neq\mathbb{N}$. Then $A\in I$ or $\mathbb{N}\setminus A\in I$. Without loss of generality assume that $A\in I$ and fix $x=\sum_{n\in A} x_n$. Then $x\in A_I(x_n)$ and $A_I(x_n)\cap (x,\infty)=\emptyset$, so $A_I(x_n)$ is not open. If $\mathbb{N}\setminus A\in I$, then by a simillar reasoning we get that $A_I(x_n)$ is not open, which gives us contradiction.
\\$\Leftarrow$. Assume that $I$ is not maximal. Let $A\subset\mathbb{N}$ be such that $A\notin I$ and $B=\mathbb{N}\setminus A\notin I$. Let $C\in I\setminus Fin$. Then $A\cap C$ is infinite or $B\cap C$ is infinite. Without loss of generality assume that $D=A\cap C$ is infinite. Since $D\subset C$ we have $D\in I$ and $E=A\setminus C\notin I$. Denote $B=\{b_1<b_2<\ldots\}$, $D=\{d_1<d_2<\ldots\}$, $E=\{e_1<e_2<\ldots\}$. Define $x_{{d_n}}=\frac{1}{2^{n}}$, $x_{{b_n}}=-\frac{1}{2^{n}}$, $x_{{e_n}}=\frac{1}{2^{n}}$ for every $n\in\mathbb{N}$. We have $A(x_n)=[-1,2]$. Since $D\in I$ we get $A_{I}(x_{n})\supset\{\sum_{n\in F} x_n : F\subset D\}=[0,1]$. Fix $x\in (1,2)$. One can find finite subset $G$ of $E$ such that $1>\sum_{n\in G} x_n>x-1$. There exists $H\subset D$ such that $\sum_{n\in G} x_n+\sum_{n\in H} x_n=\sum_{n\in G\cup H} x_n=x$. Since $H\in I$ and $G\in I$ we have $G\cup H\in I$, so $x\in A_{I}(x_n)$. Hence $A_{I}(x_n)\supset (1,2)$. In the simillar way we prove that $A_{I}(x_n)\supset (-1,0)$. We get $A_{I}(x_n)\supset (-1,2)$. Observe that $\sum_{n\in W} x_n=2$ if and only if $W=D\cup E=A\notin I$ and $\sum_{n\in U} x_n=-1$ if and only if $U=B\notin I$. Hence $2\notin A_{I}(x_n)$ and $-1\notin A_{I}(x_n)$, so $A_{I}(x_n)=(-1,2)$.
\end{proof}
\end{theorem}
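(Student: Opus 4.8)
The plan is to prove the two implications separately, disposing of (1)$\Rightarrow$(2) by contraposition and reserving the bulk of the work for an explicit construction witnessing (2)$\Rightarrow$(1).

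For (1)$\Rightarrow$(2) I would argue the contrapositive: if $I$ is maximal then $A_I(x_n)$ can never be open for $(x_n)\in\ell_1^{*}$. The idea is that maximality forces one of the two extreme sums of $A(x_n)$ to lie already in $A_I(x_n)$. Concretely, set $A=\{n:x_n>0\}$; since there are no zero terms, $\max A(x_n)=\sum_{n\in A}x_n$ and $\min A(x_n)=\sum_{n\in\mathbb{N}\setminus A}x_n$. By maximality either $A\in I$ or $\mathbb{N}\setminus A\in I$, so at least one of these extrema belongs to $A_I(x_n)$. But $A_I(x_n)\subseteq A(x_n)$ has no points beyond that extremum, so $A_I(x_n)$ contains a boundary point of an interval it cannot cross and is therefore not open. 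The degenerate cases $A=\emptyset$ and $A=\mathbb{N}$ (all terms of one sign) are handled the same way, with $0$ playing the role of the relevant extremum.

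For (2)$\Rightarrow$(1) the plan is to build a witnessing sequence by hand. Since $I\neq Fin$, pick an infinite $C\in I$; since $I$ is not maximal, pick $A\notin I$ with $B=\mathbb{N}\setminus A\notin I$. As $C$ is infinite, at least one of $A\cap C$, $B\cap C$ is infinite; say $D=A\cap C$ is. Then $D\in I$ (being a subset of $C$), while $E=A\setminus C\notin I$, since otherwise $A=D\cup E\in I$. This produces a partition of $\mathbb{N}$ into $B,D,E$ with $D\in I$ infinite and $B,E\notin I$. I would then assign geometric weights along each block, $x_{d_n}=2^{-n}$, $x_{e_n}=2^{-n}$, $x_{b_n}=-2^{-n}$, so that $(x_n)\in\ell_1^{*}$ and $A(x_n)=[-1,2]$. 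Because $D\in I$ and the subsums of $\{2^{-n}\}$ fill $[0,1]$, we already obtain $[0,1]\subseteq A_I(x_n)$. To reach a point $x\in(1,2)$ I would absorb the excess with a finite (hence $I$-) piece $G\subseteq E$ chosen so that $\sum_{n\in G}x_n\in(x-1,1)$ and then complete with a suitable $H\subseteq D$; symmetrically for $(-1,0)$ using finite subsets of $B$. This yields $(-1,2)\subseteq A_I(x_n)$.

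The crux, and the step I expect to be the main obstacle, is verifying that the two endpoints are excluded, giving exactly $A_I(x_n)=(-1,2)$. One must check that the only $W$ with $\sum_{n\in W}x_n=2$ is $W=D\cup E=A$ (all positive terms, no negative ones) and the only $U$ with $\sum_{n\in U}x_n=-1$ is $U=B$; these uniqueness claims follow because $2$ and $-1$ are the extreme values of $A(x_n)$ and the weights are strictly positive on their blocks, so attaining an extremum forces inclusion of an entire block. Since $A\notin I$ and $B\notin I$, neither extremum lies in $A_I(x_n)$, so $A_I(x_n)$ is precisely the open interval $(-1,2)$. The delicate bookkeeping lies in guaranteeing that the partition $B,D,E$ can always be arranged with $D\in I$ infinite and both $B,E\notin I$; the case split on which of $A\cap C$, $B\cap C$ is infinite is exactly what makes this arrangement available.
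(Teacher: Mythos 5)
Your proposal is correct and takes essentially the same approach as the paper: the contrapositive extremum argument (with $A=\{n: x_n>0\}$ and the degenerate cases $A=\emptyset$, $A=\mathbb{N}$) for (1)$\Rightarrow$(2), and the identical three-block construction $B$, $D=A\cap C$, $E=A\setminus C$ with geometric weights giving $A_I(x_n)=(-1,2)$ for (2)$\Rightarrow$(1). You even fill in a detail the paper leaves implicit, namely that $E\notin I$ because otherwise $A=D\cup E\in I$.
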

A simple modification of the series defined in  \ref{idealmaksymalnyizbiorotwarty} shows that if $A_{I}(x_n)$ is an open subset of $A(x_n)$, then $A_{I}(x_n)$ does not have to be the interior of $A(x_n)$.
\begin{example}
Let $I$ be an ideal, which is not maximal and $(x_n)$ be the sequence defined in the second part of  \ref{idealmaksymalnyizbiorotwarty}. Define $y_{n+1}=x_n$ for every $n\in\mathbb{N}$ and $y_{1}=3$. Then $A(y_n)=[-1,5]$ and $A_{I}(y_n)=(-1,2)\cup (2,5)$. 
\end{example}
\begin{remark}\label{bezwlasnoscibairea}
Let $x_n=\frac{1}{2^n}$ for each $n\in\mathbb{N}$ and $I$ be maximal. Then $A_I(x_n)$ is a non-measurable set, which does not satisfy the Baire property. In particular, it means that $A_I(x_n)$ is not a Borel set. 
\end{remark}
The next theorem shows how different the properties of continuous functions are from those of homeomorphisms.
\begin{example}
One can construct a continuous function with a domain, which is non-measurable, without Baire's property and an open image.
\end{example}
\begin{proof}
Let $I_1$ and $I_2$ be maximal ideals on $2\mathbb{N}-1$ and $2\mathbb{N}$ respectively. Let us define $I=\{A\subset\mathbb{N} : A\cap 2\mathbb{N}-1\in I_1, A\cap 2\mathbb{N}\in I_2\}$. Then : $2\mathbb{N}-1\notin I$ and $2\mathbb{N}\notin I$, so $I$ is not maximal. Since $\{0,1\}^\mathbb{N}=\{0,1\}^{2\mathbb{N}-1}\times\{0,1\}^{2\mathbb{N}}$ we may view $I$ as a product $I_1\times I_2$. Thus $I$ neither has Baire property (by the Kuratowski--Ulam Theorem \cite[8.41]{Kechris}) nor is measurable (by the Fubini Theorem). 
 We construct $(x_n)$ in the same way as in \ref{idealmaksymalnyizbiorotwarty}. Thus $f: I\rightarrow\mathbb{R}$ defined as $f: \{0,1\}^{\mathbb{N}}\supset \chi_A\rightarrow \sum_{n\in A}x_n$ completes the proof.
\end{proof}
We introduce the following definition 
\begin{definition}\label{supset}
Let $I$ be an ideal which is not dense and let $A$ be such that $\vert A\vert=\infty$ and for every $B\subset A$,  $B\in I$ we have $B\in Fin$. We say that $I$ has \emph{supset property} if there exists $C\supset A$ such that for every $D\subset C$,  $D\in I$ we have $D\in Fin$ and $\mathbb{N}\setminus C\in I$.
\end{definition}
Note that if $I\neq Fin$, $I\supset Fin$ then $\mathbb{N}\setminus C\notin Fin$.
\begin{example}
Let  $\vert E\vert=\infty$, $\vert\mathbb{N}\setminus E\vert=\infty$ and $I=E+Fin$. We will use the notation of \ref{supset}.
\\Note that $A$ satisfies the assumptions of  \ref{supset} if and only if $A\cap E\in Fin$. Hence $I$ is not dense. Put $C=A\cap E\cup(\mathbb{N}\setminus E)$. Then for every $D\subset C$, $D\in I$ we have $D\cap E\in Fin$ and $D\cap(\mathbb{N}\setminus E)\in Fin$, so $D\in Fin$. Note that $\mathbb{N}\setminus C\subset E$, so $\mathbb{N}\setminus C\in I$. Hence $I$ has the supset property.
\end{example}
\begin{theorem}
Let $I$ be an ideal which has the supset property. Assume that $C$ satisfies the assumptions of \ref{supset}. Then there exists $(x_n)\in \ell_1^{*}$ such that $A(x_n)$ is an interval and $A_I(x_n)$ is meager and null.
\begin{proof}
Denote $C=\{c_1<c_2<\ldots\}$, $\mathbb{N}\setminus C=\{d_1<d_2<\ldots\}$. Define $x_{c_i}=\frac{1}{2^i}$, $x_{d_i}=\frac{2}{3^i}$ for $i\in\mathbb{N}$. 
Note that $A(x_{c_i})=[0,1]$ and  $A(x_{d_i})$ is the ternary Cantor set. Hence $A(x_n)=[0,2]$. Moreover $A_I(x_n)=\{\sum_{n\in F} x_n : F\in I \}=\{\sum_{n\in G\cup H} x_n : G\in Fin\cap C, H\subset\mathbb{N}\setminus C\}=A+B$, where $A=\{\sum _{n=1}^{k}\frac{\varepsilon_{n}}{2^{n}}: (\varepsilon_{n})\in\{0,1\}^{k}, k\in\mathbb{N}\}$ is the set of all dyadic numbers from interval $[0,1)$ and $B$ is the ternary Cantor set. Hence $A_I(x_n)$ is a null, meager and dense subset of $[0,2]$ as a countable union of nowhere dense, null sets.
\end{proof}
\end{theorem}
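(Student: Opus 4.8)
The plan is to split $\mathbb{N}$ along the partition $\mathbb{N}=C\cup(\mathbb{N}\setminus C)$ and place ``interval-producing'' weights on $C$ and ``Cantor-producing'' weights on $\mathbb{N}\setminus C$. Concretely, I would enumerate $C=\{c_1<c_2<\cdots\}$ and $\mathbb{N}\setminus C=\{d_1<d_2<\cdots\}$; both are infinite, since $C\supseteq A$ is infinite by the choice in Definition~\ref{supset}, and $\mathbb{N}\setminus C$ is infinite by the remark following that definition. Set $x_{c_i}=2^{-i}$ and $x_{d_i}=2\cdot 3^{-i}$. All terms are positive and $\sum_i 2^{-i}+\sum_i 2\cdot 3^{-i}=2<\infty$, so $(x_n)\in\ell_1^{*}$.

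For the first assertion I would exploit that the two index blocks are disjoint, so the achievement set factors as a Minkowski sum $A(x_n)=A\big((x_n)_{n\in C}\big)+A\big((x_n)_{n\in\mathbb{N}\setminus C}\big)$. The dyadic weights satisfy $|x_{c_i}|\le\sum_{k>i}|x_{c_k}|$, so $A\big((x_n)_{n\in C}\big)=[0,1]$, while $2\cdot 3^{-i}$ are the classical quickly convergent Cantor weights, so $A\big((x_n)_{n\in\mathbb{N}\setminus C}\big)$ is the ternary Cantor set $K\subseteq[0,1]$. Since $0,1\in K$, adding the full interval $[0,1]$ already yields $[0,1]\cup[1,2]=[0,2]$, so $A(x_n)=[0,2]$ is an interval.

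The crux is identifying $A_I(x_n)$ exactly, and this is where the supset property does the work. I would first record the equivalence $F\in I\iff F\cap C\in Fin$: if $F\in I$ then $F\cap C\subseteq F$ lies in $I$ and is a subset of $C$, hence finite by the defining property of $C$; conversely, if $F\cap C$ is finite then $F=(F\cap C)\cup(F\cap(\mathbb{N}\setminus C))$ is the union of a finite set and a subset of $\mathbb{N}\setminus C\in I$, hence in $I$. Using the same block decomposition this gives
\[
A_I(x_n)=\Big\{\textstyle\sum_{i\in G}2^{-i}:G\subseteq\mathbb{N}\text{ finite}\Big\}+K=D+K,
\]
where $D$ is the countable set of finite dyadic sums (the dyadic rationals in $[0,1)$) and $K$ is the Cantor set. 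I expect this step to be the main obstacle: one must verify carefully that membership $F\in I$ constrains only the $C$-part of $F$ and imposes nothing on the $(\mathbb{N}\setminus C)$-part, which is exactly what the two clauses of the supset property (only finite subsets of $C$ lie in $I$, and $\mathbb{N}\setminus C\in I$) guarantee.

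Finally I would conclude by writing $A_I(x_n)=D+K=\bigcup_{d\in D}(d+K)$. Each translate $d+K$ is a closed, nowhere dense, Lebesgue-null set, and since $D$ is countable, $A_I(x_n)$ is a countable union of nowhere dense null sets, hence meager and null; because $D$ is dense in $[0,1]$, it is in fact a dense meager null subset of $[0,2]$.
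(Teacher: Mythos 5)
Your proposal is correct and follows essentially the same route as the paper: the same weights $x_{c_i}=2^{-i}$ on $C$ and $x_{d_i}=2\cdot 3^{-i}$ on $\mathbb{N}\setminus C$, the same Minkowski-sum decomposition giving $A(x_n)=[0,2]$, the same identification $A_I(x_n)=D+K$ (dyadic rationals plus Cantor set), and the same conclusion via a countable union of nowhere dense null sets. The only difference is that you spell out the equivalence $F\in I\iff F\cap C\in Fin$, which the paper asserts implicitly; this is a welcome clarification, not a divergence.
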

\begin{theorem}
For every ideal $I$, which is meager and null, the achievement set $A_I(\frac{1}{2^n})$ is a meager and null subset of $[0,1]=A(\frac{1}{2^n})$. In particular the above holds for Borel ideals. 
\begin{proof}
As we mentioned in the Background Section, the function $f(\chi_A)=\sum_{n\in A}\frac{1}{2^n}$, $f:\{0,1\}^\N\to[0,1]$ preserves meager and null sets. Since a Borel ideal $I$ is meager and null, so is $A_I(\frac{1}{2^n})=f(I)$. 
\end{proof}
\end{theorem}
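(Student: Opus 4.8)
The plan is to obtain the statement directly from the measure- and category-preservation properties of the associated function collected in the Background Section; accordingly the argument will be short, and I expect no substantial obstacle. Throughout I identify the ideal $I$ with the set $\{\chi_A:A\in I\}\subset\{0,1\}^{\N}$ and write $f:\{0,1\}^{\N}\to[0,1]$ for the associated function $f(\chi_A)=\sum_{n\in A}\frac{1}{2^n}$, so that $A_I(\frac{1}{2^n})=f(I)$.

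First I would dispose of the meager part. Since $I$ is meager in $\{0,1\}^{\N}$, the third bulleted property of $f$ in the Background -- ``if $X$ is meager, then so is $f(X)$'' -- applies with $X=I$ and yields at once that $A_I(\frac{1}{2^n})=f(I)$ is meager in $[0,1]$.

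Next I would treat the null part. Being null, $I$ is in particular $\mu$-measurable with $\mu(I)=0$. The measure-preservation property (``if $X$ is measurable then $f(X)$ is measurable and $\mu(X)=\lambda(f(X))$'') applied to $X=I$ then gives that $f(I)$ is $\lambda$-measurable with $\lambda(f(I))=\mu(I)=0$; hence $A_I(\frac{1}{2^n})$ is null.

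Finally, for the clause on Borel ideals, I would recall that every Borel subset of $\{0,1\}^{\N}$ has the Baire property and is $\mu$-measurable. By Talagrand's characterization recalled in the Background, an ideal has the Baire property exactly when it is meager and is measurable exactly when it is null; hence any Borel ideal is simultaneously meager and null, and the first part of the theorem applies. The only point worth flagging is that $f$ is here not injective, but this is harmless: the preservation properties are asserted for this specific $f$ regardless of injectivity, so nothing beyond the verbatim quotations above is needed.
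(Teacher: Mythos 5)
Your proof is correct and follows essentially the same route as the paper: both arguments simply apply the meager- and measure-preservation properties of the associated function $f$ (collected in the Background Section) to the set $I\subset\{0,1\}^{\N}$, obtaining that $A_I(\frac{1}{2^n})=f(I)$ is meager and null. Your explicit justification of the ``in particular'' clause for Borel ideals (Borel $\Rightarrow$ Baire property and measurable $\Rightarrow$ meager and null, via the Talagrand-type equivalence quoted in the Background) is a welcome elaboration of a step the paper leaves implicit.
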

\begin{theorem}
Assume that $I$ is maximal. If $A(x_n)$ is nonmeager, then $A_{I}(x_n)$ is nonmeager. If $\lambda^{\ast}(A(x_n))>0$ and $A_{I}(x_n)$ is measurable, then $\lambda^{\ast}(A_{I}(x_n))>0$, where $\lambda^{\ast}$ is the outer Lebesgue measure.
\end{theorem}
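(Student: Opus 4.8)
The plan is to reduce both assertions to a single decomposition of $A(x_n)$ forced by the maximality of $I$. Recall that $A(x_n)=f(\{0,1\}^{\mathbb{N}})$ and $A_I(x_n)=f(I)$, where $f(\chi_A)=\sum_{n\in A}x_n$, and (working with $(x_n)\in\ell_1$) put $S=\sum_{n=1}^{\infty}x_n$, a finite real. Since $I$ is maximal, for every $A\subset\mathbb{N}$ exactly one of $A\in I$, $\mathbb{N}\setminus A\in I$ holds, so $\{0,1\}^{\mathbb{N}}=I\cup F_I$ with $F_I=\{A:\mathbb{N}\setminus A\in I\}$ its dual filter. The complementation map $h(\chi_A)=\chi_{\mathbb{N}\setminus A}$ is a homeomorphism of $\{0,1\}^{\mathbb{N}}$ carrying $I$ onto $F_I$, and on sums it satisfies $f(h(\chi_A))=\sum_{n\in\mathbb{N}\setminus A}x_n=S-f(\chi_A)$ by absolute convergence. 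Applying $f$ to $\{0,1\}^{\mathbb{N}}=I\cup F_I$ therefore yields the crucial identity
\[
A(x_n)=f(I)\cup f(F_I)=A_I(x_n)\cup\bigl(S-A_I(x_n)\bigr).
\]

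With this identity the first assertion is immediate. The affine reflection $t\mapsto S-t$ is a homeomorphism of $\mathbb{R}$, hence preserves the class of meager sets. If $A_I(x_n)$ were meager, then $S-A_I(x_n)$ would be meager as well, so $A(x_n)$, being the union of two meager sets, would be meager, contradicting the hypothesis. Thus $A_I(x_n)$ is nonmeager.

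For the second assertion I would invoke countable subadditivity of the outer Lebesgue measure together with its invariance under $t\mapsto S-t$ (both the translation and the reflection $t\mapsto -t$ leave $\lambda^{\ast}$ unchanged, so $\lambda^{\ast}(S-A_I(x_n))=\lambda^{\ast}(A_I(x_n))$). From the identity above,
\[
0<\lambda^{\ast}(A(x_n))\leq\lambda^{\ast}(A_I(x_n))+\lambda^{\ast}\bigl(S-A_I(x_n)\bigr)=2\,\lambda^{\ast}(A_I(x_n)),
\]
whence $\lambda^{\ast}(A_I(x_n))>0$; when $A_I(x_n)$ is measurable this reads $\lambda(A_I(x_n))>0$.

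The entire content sits in the decomposition identity, so the only (modest) obstacle is checking that maximality is precisely what splits $\{0,1\}^{\mathbb{N}}$ as $I\cup F_I$ and that $f$ intertwines complementation with the reflection $t\mapsto S-t$. Everything after that is a one-line union argument for the meager case and a one-line subadditivity estimate for the measure case; in fact the outer-measure bound does not even use the measurability hypothesis, which only serves to rephrase the conclusion in terms of $\lambda$ rather than $\lambda^{\ast}$.
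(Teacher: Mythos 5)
Your proof is correct, and it packages the argument more cleanly than the paper does, though both rest on the same mechanism: maximality plus the reflection $t\mapsto S-t$. The paper argues pointwise and locally: it sets $B=[0,\tfrac{S}{2})\cap A_I(x_n)$, and if $B$ is meager it uses the symmetry of $A(x_n)$ about $\tfrac{S}{2}$ to find a nonmeager set $C\setminus B\subset A(x_n)\setminus A_I(x_n)$ in the left half, then reflects it into $A_I(x_n)$ point by point (for $x\notin A_I(x_n)$ every representing set $A$ has $A\notin I$, hence $\mathbb{N}\setminus A\in I$ and $S-x\in A_I(x_n)$); the measure case is handled by the remark that the same words work with ``null'' in place of ``meager''. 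You instead extract the global identity $A(x_n)=A_I(x_n)\cup\bigl(S-A_I(x_n)\bigr)$ from $P(\mathbb{N})=I\cup F_I$ --- which is exactly the observation the paper itself records later, in Section 5, as $A_{F_I}(x_n)=S-A_I(x_n)$ and $A_I(x_n)\cup A_{F_I}(x_n)=A(x_n)$ for maximal $I$ --- and then both conclusions follow from invariance of meagerness, respectively of $\lambda^{\ast}$, under the reflection. What your route buys: a uniform one-line treatment of category and measure (the paper's half-interval case analysis becomes unnecessary), the quantitative bound $\lambda^{\ast}(A_I(x_n))\geq\tfrac{1}{2}\lambda^{\ast}(A(x_n))$, the correct observation that the measurability hypothesis is superfluous for the conclusion $\lambda^{\ast}(A_I(x_n))>0$, and independence from the implicit assumption of nonnegative terms that is built into the paper's choice of the window $[0,\tfrac{S}{2})$. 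The only shared caveat is that the decomposition identity needs absolute convergence (so that $\sum_{n\in\mathbb{N}\setminus A}x_n=S-\sum_{n\in A}x_n$), which you state explicitly and which is the standing assumption of that section of the paper.
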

\begin{proof}
Let  $A(x_n)$ be nonmeager. Denote $B=[0,\frac{1}{2}\sum_{n=1}^{\infty}x_n)\cap A_{I}(x_n)$. If $B$ is nonmeager, then $A_{I}(x_n)\supset B$ is also nonmeager.
Suppose that $B$ is meager. Since $\frac{1}{2}\sum_{n=1}^{\infty}x_n$ is a point of reflection of $A(x_n)$, we get that $C=[0,\frac{1}{2}\sum_{n=1}^{\infty}x_n)\cap A(x_n)$ is nonmeager, so $C\setminus B$ is also nonmeager. Fix $x\in C\setminus B$. Then $x\in A(x_n)$ and $x\notin A_{I}(x_n)$. Thus $x=\sum_{n\in A}x_n$ for some $A\notin I$. Hence $\sum_{n=1}^{\infty}x_n-x =\sum_{n\in\mathbb{N}\setminus A}x_n\in A_{I}(x_n)$. We showed that $\sum_{n=1}^{\infty}x_n-(C\setminus B)\subset A_{I}(x_n)$. Note that $\sum_{n=1}^{\infty}x_n-(C\setminus B)$ is nonmeager, so $A_{I}(x_n)$ is nonmeager.
Observe that if we replace the properties of the defined sets of being  nonmeager and  meager  with being nonnull and null, respectively, then the statement also holds.
\end{proof}
\begin{corollary}
Assume that $I$ is maximal and $A=\{x\in A(x_n) : x=\sum_{n=1}^{\infty}\varepsilon_n x_n \ \text{for the unique sequence} (\varepsilon_n) \}$ is comeager in $A(x_n)$. Then $A_{I}(x_n)$ cannot be comeager in $A(x_n)$.
\end{corollary}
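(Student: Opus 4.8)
The plan is to exploit the reflection symmetry of $A(x_n)$ together with the uniqueness of representations. Write $s=\sum_{n=1}^{\infty}x_n$ and consider the reflection $R(x)=s-x$. Since for $x=\sum_{n\in E}x_n$ we have $s-x=\sum_{n\in\mathbb{N}\setminus E}x_n$, the map $R$ is an involutive homeomorphism of the compact set $A(x_n)$ onto itself. Moreover, if $x\in A$ has the unique representing set $E$, then $s-x$ is represented only by $\mathbb{N}\setminus E$ (any other representation of $s-x$ would produce, via $R$, a second representation of $x$), so $R(A)=A$. I would record at this point the standing fact that here $A(x_n)$ is compact, as $(x_n)\in\ell_1$, and therefore a Baire space; this is what makes ``comeager'' incompatible with ``meager'' for the same nonempty set.

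Next I would use the maximality of $I$ to split $A$. For $x\in A$ with unique representing set $E$, maximality gives that exactly one of $E\in I$ and $\mathbb{N}\setminus E\in I$ holds, both being impossible since $E\cup(\mathbb{N}\setminus E)=\mathbb{N}\notin I$. Because $x$ has no other representation, $x\in A_I(x_n)$ if and only if $E\in I$; and likewise $R(x)=s-x\in A_I(x_n)$ if and only if $\mathbb{N}\setminus E\in I$. Thus for each $x\in A$ exactly one of $x$ and $R(x)$ lies in $A_I(x_n)$, which means $R$ interchanges the two sets $A\cap A_I(x_n)$ and $A\setminus A_I(x_n)$.

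Finally I would argue by contradiction using Baire category. Suppose $A_I(x_n)$ were comeager in $A(x_n)$. As $A$ is comeager by hypothesis, the intersection $A\cap A_I(x_n)$ is comeager as well. Applying the homeomorphism $R$, which preserves category, its image $A\setminus A_I(x_n)$ is also comeager. But $A\setminus A_I(x_n)$ is contained in $A(x_n)\setminus A_I(x_n)$, which is meager by the assumption that $A_I(x_n)$ is comeager, so $A\setminus A_I(x_n)$ would be simultaneously comeager and meager. Since $A(x_n)$ is Baire and nonempty, it cannot be meager in itself, and this is the desired contradiction.

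The main obstacle is the second step: one must verify carefully that uniqueness of the representing set, combined with maximality of $I$, forces the clean dichotomy ``$x\in A_I(x_n)$ or $R(x)\in A_I(x_n)$, but not both,'' and that $R$ genuinely swaps the two pieces of $A$ rather than merely mapping one inside the other. Once this symmetry is established, the category contradiction is immediate.
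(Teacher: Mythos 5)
Your proposal is correct and is essentially the paper's argument: both rest on the reflection $x\mapsto\sum_{n=1}^{\infty}x_n-x$, the observation that uniqueness of representation forces this reflection to carry points of $A\cap A_{I}(x_n)$ outside $A_{I}(x_n)$, and a Baire-category contradiction in the compact set $A(x_n)$. The only organizational difference is that the paper localizes the reflection to the two halves of $A(x_n)$ around $\frac{1}{2}\sum_{n=1}^{\infty}x_n$, whereas you run the involution globally; note also that your full ``swap'' is more than is needed, since the single inclusion $R(A\cap A_{I}(x_n))\subset A\setminus A_{I}(x_n)$ --- which holds for any ideal, without maximality --- already gives the contradiction.
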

\begin{proof}
Suppose that $A_{I}(x_n)$ is comeager in $A(x_n)$. Then $A\cap A_{I}(x_n)$ is comeager in $A(x_n)$.
Since $B=A\cap A_{I}(x_n)\cap [0,\frac{1}{2}\sum_{n=1}^{\infty}x_n)$ is comeager in $A(x_n)\cap [0,\frac{1}{2}\sum_{n=1}^{\infty}x_n)$ we get $(\sum_{n=1}^{\infty}x_n - B)\cap A_{I}(x_n)$ is meager. Note that $A_{I}(x_n)\cap(\frac{1}{2}\sum_{n=1}^{\infty}x_n,\sum_{n=1}^{\infty}x_n)\subset ((\sum_{n=1}^{\infty}x_n - B)\cap A_{I}(x_n))\cup (\mathbb{N}\setminus A)$, so $A_{I}(x_n)$ is meager in $(\frac{1}{2}\sum_{n=1}^{\infty}x_n,\sum_{n=1}^{\infty}x_n)$, which gives a contradiction.
\end{proof}

\section{Symmetrization of ideal achievement sets}
The achievement set $A(x_n)$ is symmetric, while its ideal counterpart $A_I(x_n)$ lacks symmetry. To fix the symmetry we add to $A_I(x_n)$ its filter counterpart $A_{F_I}(x_n)$. 
%We define $A_{F_I}(x_n)=\{\sum_{n\in A} x_n : A\in F_I\}$, where $F_I$ is a dual filter of ideal $I$.
 Simply observations shows that if $x=\sum_{n\in A}x_n$ for some $A\in I$ then $\sum_{n=1}^{\infty}x_n-x=\sum_{n\in\mathbb{N}\setminus A}x_n\in A_{F_I}(x_n)$ and vice versa. Hence $A_{F_I}(x_n)=\sum_{n=1}^{\infty}x_n-A_I(x_n)$.
 It is clear that $A_{I}(x_n)\cap A_{F_I}(x_n)$ and $A_{I}(x_n)\cup A_{F_I}(x_n)$  are symmetric with a point of reflection $\frac{1}{2}\sum_{n=1}^{\infty}x_n$. Moreover if $I$ is maximal, then $A_{I}(x_n)\cup A_{F_I}(x_n)=A(x_n)$.
Moreover if every point of $A(x_n)$ is obtained uniquely, then $A_{I}(x_n)\cap A_{F_I}(x_n)=\emptyset$. 
\begin{proposition}
 Let $I$ be an ideal. Then $A_{I}(x_n)\cap A_{F_I}(x_n)\subset A(x_n)\setminus\{\max A(x_n),\min A(x_n)\}$.
\end{proposition}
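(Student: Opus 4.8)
The plan is to verify the two forbidden values separately, namely $\max A(x_n)\notin A_I(x_n)\cap A_{F_I}(x_n)$ and $\min A(x_n)\notin A_I(x_n)\cap A_{F_I}(x_n)$; since every other point of $A(x_n)$ is permitted, this is exactly the stated inclusion. Write $M=\max A(x_n)$ and $m=\min A(x_n)$, which for a nonzero-termed absolutely convergent series equal $\sum_{x_n>0}x_n$ and $\sum_{x_n<0}x_n$ respectively, and record that $\sum_{n=1}^{\infty}x_n=M+m$.

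First I would convert membership in the filter set into membership in the ideal set via the identity $A_{F_I}(x_n)=\sum_{n=1}^{\infty}x_n-A_I(x_n)$ recorded just above the statement. Since $\sum_{n=1}^{\infty}x_n-M=m$, this gives $M\in A_{F_I}(x_n)$ if and only if $m\in A_I(x_n)$, and symmetrically $m\in A_{F_I}(x_n)$ if and only if $M\in A_I(x_n)$. Hence $M\in A_I(x_n)\cap A_{F_I}(x_n)$ is equivalent to the conjunction ``$M\in A_I(x_n)$ and $m\in A_I(x_n)$'', and the very same conjunction characterizes $m\in A_I(x_n)\cap A_{F_I}(x_n)$.

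It therefore suffices to rule out that $M$ and $m$ lie simultaneously in $A_I(x_n)$, which is precisely the obstruction already exploited in the proof of Proposition \ref{idealowywlasciwy}. Indeed, since all terms are nonzero, the value $M$ is attained only by the index set $P=\{n:x_n>0\}$ and the value $m$ only by $N=\{n:x_n<0\}$ (omitting a positive term or including a negative one strictly lowers the subsum). Thus $M\in A_I(x_n)$ forces $P\in I$ and $m\in A_I(x_n)$ forces $N\in I$; then $\mathbb{N}=P\cup N\in I$, contradicting that $I$ is an ideal. Combining this with the equivalences of the previous paragraph yields both $M\notin A_I(x_n)\cap A_{F_I}(x_n)$ and $m\notin A_I(x_n)\cap A_{F_I}(x_n)$, which is the assertion.

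The only delicate point, and the step I expect to carry all the weight, is the uniqueness of the extremal representations: it is exactly the nonvanishing of the terms that guarantees that attaining $M$ (respectively $m$) forces the entire set $P$ (respectively $N$) into $I$. Once this is in place the remainder is just the reflection bookkeeping of the second paragraph, so I anticipate no further obstacle.
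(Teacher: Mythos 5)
Your proposal is correct and is essentially the paper's own argument: both proofs combine the reflection identity $A_{F_I}(x_n)=\sum_{n=1}^{\infty}x_n-A_I(x_n)$ with the unique-representation obstruction underlying Proposition \ref{idealowywlasciwy} (the sets $\{n:x_n>0\}$ and $\{n:x_n<0\}$ cannot both lie in $I$, since their union is $\mathbb{N}$). The paper phrases this as ``$A_I(x_n)$ omits the maximum or the minimum, and then by reflection $A_{F_I}(x_n)$ omits the other,'' while you recast it as pointwise equivalences for the two extreme values; the mathematical content is the same.
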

\begin{proof}
By Proposition \ref{idealowywlasciwy} we obtain that $A_{I}(x_n)\subset A(x_n)\setminus\{\max A(x_n)\}$ or $A_{I}(x_n)\subset A(x_n)\setminus\{\min A(x_n)\}$. Since  $A_{F_I}(x_n)=\sum_{n=1}^{\infty}x_n-A_I(x_n)$ we get  $A_{F_I}(x_n)\subset A(x_n)\setminus\{\min A(x_n)\}$ or $A_{F_I}(x_n)\subset A(x_n)\setminus\{\max A(x_n)\}$ respectively.
\end{proof}
\begin{example}
Let $(x_n)$ be the sequence defined  in the proof of Theorem \ref{idealmaksymalnyizbiorotwarty}, then $A_{I}(x_n)=[-1,2]$ and $A_{I}(x_n)=(-1,2)$. Hence  $A_{F_I}(x_n)=(-1,2)$, so $A_{I}(x_n)\cap A_{F_I}(x_n)= A(x_n)\setminus\{\max A(x_n),\min A(x_n)\}$.
\end{example}
\begin{example}
Let $(x_n)$ be the sequence defined  in the proof of Theorem \ref{brakujacysingleton}, then $A_{I}(x_n)\cap A_{F_I}(x_n)= A(x_n)\setminus\{\max A(x_n),\min A(x_n)\}$ and $A_{I}(x_n)\neq A_{F_I}(x_n)$. Note that $A_{I}(x_n)\cup A_{F_I}(x_n)=A(x_n)$ despite of $I$ does not need to be maximal. 
\end{example}
Now we consider the case when $A_{I}(x_n)\cap A_{F_I}(x_n)$ is a singleton.
\begin{proposition}
Assume that $A_{I}(x_n)\cap A_{F_I}(x_n)=\{x\}$. Then 
\begin{enumerate}
\item $x=\frac{1}{2}  \sum_{n=1}^{\infty}x_n$
\item if $x=\sum_{n\in A}x_n=\sum_{n\in\mathbb{N}\setminus B}x_n$ for $A,B\in I$ then $A=B$
\item if $x=\sum_{n\in A}x_n=\sum_{n\in B}x_n$ for $A,B\in I$ then $A=B$ 
\end{enumerate}
\end{proposition}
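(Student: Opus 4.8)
The plan is to prove the three items in the order (1), (3), (2), since (2) will be an immediate consequence of the other two. Throughout write $s=\sum_{n=1}^{\infty}x_n$ and use the two facts recorded just before the proposition: $A_{F_I}(x_n)=s-A_I(x_n)$, and the intersection $A_{I}(x_n)\cap A_{F_I}(x_n)$ is symmetric about $s/2$.

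For (1) I would argue by reflection. As $x\in A_{I}(x_n)$, the identity $A_{F_I}(x_n)=s-A_I(x_n)$ gives $s-x\in A_{F_I}(x_n)$; and as $x\in A_{F_I}(x_n)$ it gives $s-x\in A_I(x_n)$. Hence $s-x\in A_{I}(x_n)\cap A_{F_I}(x_n)=\{x\}$, so $s-x=x$ and $x=\tfrac12 s$.

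The real content is (3), and here is where I expect the main difficulty. Suppose $A,B\in I$ both sum to $x$. Since $I$ is closed under finite unions and under subsets, $A\cup B$ and $A\cap B$ both belong to $I$; setting $t=\sum_{n\in A\setminus B}x_n=\sum_{n\in B\setminus A}x_n$ (the two sums agree after removing the common block $A\cap B$), a direct computation gives $\sum_{n\in A\cup B}x_n=x+t$ and $\sum_{n\in A\cap B}x_n=x-t$. Thus $x+t\in A_I(x_n)$, while $s-(x+t)=x-t\in A_I(x_n)$ forces $x+t\in A_{F_I}(x_n)$; as the intersection is the single point $x=s/2$, we conclude $t=0$. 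The obstacle is that $t=0$ alone does \emph{not} give $A=B$, because $A\setminus B$ might sum to zero through internal cancellation. To kill this possibility I would test every subset: for $Z:=A\setminus B$ and any $Z_1\subseteq Z$, since $\sum_{n\in Z}x_n=0$ the sets $(A\cap B)\cup Z_1$ and $(A\cap B)\cup(Z\setminus Z_1)$ lie in $I$ and sum to $x+w$ and $x-w$, where $w=\sum_{n\in Z_1}x_n$. The same intersection argument then yields $w=0$ for every such $Z_1$; taking $Z_1=\{m\}$ shows $x_m=0$ for each $m\in A\setminus B$, and symmetrically for each $m\in B\setminus A$. Because $(x_n)\in\ell_1^{*}$ has no vanishing term, both $A\setminus B$ and $B\setminus A$ are empty, that is $A=B$.

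Finally (2) reduces to (3). If $x=\sum_{n\in A}x_n=\sum_{n\in\mathbb{N}\setminus B}x_n$ with $A,B\in I$, then $\mathbb{N}\setminus B\in F_I$ and, using (1), $\sum_{n\in B}x_n=s-\sum_{n\in\mathbb{N}\setminus B}x_n=s-x=x$. So $A$ and $B$ are two members of $I$ summing to the same value $x$, and (3) gives $A=B$. It is worth emphasising that the passage from $t=0$ to $A=B$ is exactly where the hypothesis $(x_n)\in\ell_1^{*}$ is essential; without it the conclusion would only hold up to indices carrying zero terms.
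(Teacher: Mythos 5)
Your proof is correct, and its core coincides with the paper's: item (1) is the same reflection argument via $A_{F_I}(x_n)=s-A_I(x_n)$, where $s=\sum_{n=1}^{\infty}x_n$, and in item (3) both you and the paper place $\sum_{n\in A\cap B}x_n$ and $\sum_{n\in A\cup B}x_n$ in the singleton intersection to conclude $\sum_{n\in A\setminus B}x_n=\sum_{n\in B\setminus A}x_n=0$, and then eliminate the symmetric difference index by index. You differ in two places. In the endgame of (3) you pair two sets lying in $I$, namely $(A\cap B)\cup\{m\}$ and $(A\cap B)\cup\bigl((A\setminus B)\setminus\{m\}\bigr)$, and reflect one of them through $s$, whereas the paper pairs $(A\cap B)\cup\{k\}\in I$ directly against $(\mathbb{N}\setminus(A\cup B))\cup\{k\}\in F_I$; the two devices are interchangeable. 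More substantially, you obtain (2) as a corollary of (1) and (3): since $x=s/2$, the hypothesis $\sum_{n\in\mathbb{N}\setminus B}x_n=x$ gives $\sum_{n\in B}x_n=s-x=x$, and (3) finishes. The paper instead proves (2) by a self-contained element-wise argument, using perturbations $x\pm x_k$ to show $A\cap(\mathbb{N}\setminus B)=\emptyset$ and $A\cup(\mathbb{N}\setminus B)=\mathbb{N}$. Your reduction is shorter and makes the logical dependence among the three items visible, at the cost of needing (3) first. Finally, your explicit appeal to $(x_n)\in\ell_1^{*}$ is not an illegitimate added hypothesis but a necessary one: the statement fails for sequences with vanishing terms (if every $x_n=0$, the intersection is $\{0\}$ yet (2) and (3) fail for $A=\emptyset$, $B=\{1\}$), and the paper's own contradictions of the form $x\pm x_k\in A_{I}(x_n)\cap A_{F_I}(x_n)$ are contradictions only when $x_k\neq 0$, so the paper relies on the same assumption tacitly; making it explicit is an improvement.
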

\begin{proof}
\begin{enumerate}
\item Since $A_{F_I}(x_n)=\sum_{n=1}^{\infty}x_n-A_I(x_n)$, it is clear that $x=\frac{1}{2}  \sum_{n=1}^{\infty}x_n$.
\item Let $x=\sum_{n\in A}x_n=\sum_{n\in\mathbb{N}\setminus B}x_n$ for some $A,B\in I$. If there exists $k\in A\cap(\mathbb{N}\setminus B)$, then $x-x_k=\sum_{n\in A\setminus\{k\}}x_n$ and $x-x_k=\sum_{n\in\mathbb{N}\setminus (B\cup\{k\}}x_n$, so $x-x_k\in A_{I}(x_n)\cap A_{F_I}(x_n)$, which gives us a contradiction. Hence $A\cap(\mathbb{N}\setminus B)=\emptyset$.  If there exists $k\notin A\cup(\mathbb{N}\setminus B)$, then $x+x_k=\sum_{n\in A\cup\{k\}}x_n$ and $x+x_k=\sum_{n\in\mathbb{N}\setminus B\cup\{k\}}x_n$, so $x+x_k\in A_{I}(x_n)\cap A_{F_I}(x_n)$, which yields a contradiction. Hence $A\cup(\mathbb{N}\setminus B)=\mathbb{N}$. We proved that $\mathbb{N}\setminus B$ is a complement of $A$, so $A=B$. 
\item Assume that $x=\sum_{n\in A}x_n=\sum_{n\in B}x_n=\sum_{n\in\mathbb{N}\setminus A}x_n=\sum_{n\in\mathbb{N}\setminus B}x_n$ and $A\neq B$. Then $\sum_{n\in A\cup B}x_n=\sum_{n\in A}x_n+\sum_{n\in A}x_n-\sum_{n\in A\cap B}x_n=\sum_{n=1}^{\infty}x_n-\sum_{n\in A\cap B}x_n$. Since $\sum_{n\in A\cap B}x_n\in A_{I}(x_n)$, we get $\sum_{n\in A\cup B}x_n\in A_{F_I}(x_n)$. Since $A\cup B\in I$, by using the same reasoning we obtain $\sum_{n\in A\cap B}x_n\in A_{F_I}(x_n)$. Hence $\sum_{n\in A\cap B}x_n\in A_{I}(x_n)\cap A_{F_I}(x_n)$ and $\sum_{n\in A\cup B}x_n\in A_{I}(x_n)\cap A_{F_I}(x_n)$. Thus $\sum_{n\in A\cup B}x_n=\sum_{n\in A\cap B}x_n=x$. Therefore $\sum_{n\in A\setminus B}x_n=\sum_{n\in B\setminus A}x_n=0$. Since $A\neq B$, we know that $A\setminus B\neq\emptyset$ or $B\setminus A\neq\emptyset$. Assume that there exists $k\in B\setminus A$. Let $C=A\cap B\cup\{k\}\in I$, then $\sum_{n\in C}x_n = x+x_k$. Let $D=(\mathbb{N}\setminus(A\cup B))\cup\{k\}\in F_I$, then  $\sum_{n\in D}x_n=x+x_k$. Hence $x+x_k\in A_{I}(x_n)\cap A_{F_I}(x_n)$, which gives a contradiction. Hence $B\setminus A=\emptyset$ and in the same way we prove  $A\setminus B=\emptyset$. Thus $A=B$.
\end{enumerate}
\end{proof}
\begin{example}
There exists a sequence $(x_n)$ such that for each ideal $I$ we have that $A_{I}(x_n)\cap A_{F_I}(x_n)$ is a singleton.
\end{example}
\begin{proof}
Define $x_1=1$, $x_{n+1}=\frac{2}{3^n}$ for $n\in\mathbb{N}$. Note that $A(x_n)=C\cup (1+C)$, where $C$ is the ternary Cantor set.  Let $I$ be an ideal. Since $1=x_1=\sum_{n=2}^{\infty}x_n$, we get $1\in A_{I}(x_n)\cap A_{F_I}(x_n)$. Fix $x\neq 1$. There exists a unique set $A\subset\mathbb{N}$ such that $x=\sum_{n\in A}x_n$. Assume that $x\in A_{I}(x_n)\cap A_{F_I}(x_n)$. Thus $A\in I$ and $A\in F_I$, which gives a contradiction. Hence $A_{I}(x_n)\cap A_{F_I}(x_n)=\{1\}$.
\end{proof}
\section{Injectivity of the associated function}
Here we consider when the equality $A_I(x_n)=A_J(x_n)$ holds for two distinct ideals $I\neq J$. Let us consider an instructive example.
\begin{example}
Let $x_{2n-1}=\frac{1}{2^n}$, $x_{2n}=\frac{1}{2^n}$ for $n\in\mathbb{N}$ and $I=2\mathbb{N}-1+Fin$, $J=2\mathbb{N}+Fin$. Hence $A_I(x_n)=[0,2)=A_J(x_n)$. Note that $I\cap J=Fin$.
\end{example}
Now let us consider two ideals $I,J$ from which one is bigger that the other, that is $I\subset J$. We ask if it is possible to obtain  $A_I(x_n)=A_J(x_n)$. Note that in Theorem \ref{brakujacysingleton} we have constructed an ideal $I$ about which we only assumed that some sequence of indices $(a_n)\in I$, that is $I=(a_n)+Fin$ and we obtained that $A_I(x_n)=A(x_n)\setminus\{x\}$ for some $x>0$. By Proposition \ref{idealowywlasciwy} we get $A_J(x_n)=A(x_n)\setminus\{x\}$ for any $J\supset I$. The idea of Theorem \ref{brakujacysingleton} was to construct for an ideal $I$ the series for which $A_I(x_n)$ is ''big''. In this chapter we reverse this dependence, that is we solve the problem when for a series we can find two distinct ideals $I,J$ such that $A_I(x_n)=A_J(x_n)$. Clearly the series cannot be quickly convergent, since then for $I\neq J$ we always have $A_I(x_n)\neq A_J(x_n)$.
\begin{theorem}\label{rowne}
Let $\sum_{n=1}^{\infty}x_n$ be an absolutely convergent series. Let us consider the following conditions:
\begin{enumerate}
\item $f$ is injective; 
\item for every ideals $I\neq J$ we have $A_{I}(x_n)\neq A_{J}(x_n)$;
\item for every ideals $I\subsetneq J$ we have $A_{I}(x_n)\subsetneq A_{J}(x_n)$.
\end{enumerate}
Then the condition $(1)$ implies $(2)$ and the condition $(2)$ implies $(3)$. 
\begin{proof}
Proofs of both implications are clear.%, so we only give the counterexamples showing that we cannot reverse it.
\end{proof}
\end{theorem}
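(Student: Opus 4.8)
The plan is to reduce everything to the set-theoretic behaviour of the associated function. Recall that $A_I(x_n)=\{f(\chi_A):A\in I\}=f(I)$ under the identification of subsets of $\mathbb{N}$ with their characteristic functions, and that absolute convergence guarantees $f$ is well defined on all of $\{0,1\}^{\mathbb{N}}$. Two facts will drive both implications: monotonicity, namely $I\subseteq J\Rightarrow A_I(x_n)\subseteq A_J(x_n)$ (immediate, since $\{\chi_A:A\in I\}\subseteq\{\chi_A:A\in J\}$), and the observation that injectivity of $f$ lets one recover membership in an ideal from the value of a subsum.

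For $(1)\Rightarrow(2)$, I would argue by exhibiting a point separating the two achievement sets. Assume $f$ is injective and $I\neq J$; without loss of generality pick $A\in I\setminus J$. Then $f(\chi_A)\in A_I(x_n)$. The key step is to check $f(\chi_A)\notin A_J(x_n)$: if it were, there would be $B\in J$ with $f(\chi_B)=f(\chi_A)$, whence injectivity forces $\chi_B=\chi_A$, i.e.\ $B=A$, so $A\in J$, contradicting the choice of $A$. Hence $f(\chi_A)$ witnesses $A_I(x_n)\neq A_J(x_n)$.

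For $(2)\Rightarrow(3)$, suppose $(2)$ holds and $I\subsetneq J$. Monotonicity gives $A_I(x_n)\subseteq A_J(x_n)$, while $I\neq J$ together with $(2)$ gives $A_I(x_n)\neq A_J(x_n)$; combining the two yields the strict inclusion $A_I(x_n)\subsetneq A_J(x_n)$.

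I do not expect a genuine obstacle here, which matches the author's remark that the proofs are clear: the whole content is the elementary interplay of monotonicity with the separating-point argument, and the only place where a hypothesis is used in an essential (rather than formal) way is the appeal to injectivity that forces $B=A$ in the $(1)\Rightarrow(2)$ step. It is worth flagging that neither implication reverses in general, so the theorem is exactly this one-directional chain $(1)\Rightarrow(2)\Rightarrow(3)$ and no more; the interesting mathematics lies in analysing when these conditions actually hold for non-quickly-convergent series, which is the subject of the subsequent results rather than of this statement.
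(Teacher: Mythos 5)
Your proof is correct and fills in exactly the routine details the paper omits when it declares both implications clear: the separating point $f(\chi_A)$ for $A$ in the symmetric difference under injectivity, and monotonicity of $I\mapsto A_I(x_n)$ combined with condition $(2)$ for the strict inclusion. This is the same (indeed the only natural) approach, so there is nothing to add.
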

All three conditions look quite simillar, however none of the implications in Theorem \ref{rowne} can be reversed, which is showed by the following examples.
\begin{example}
Let us consider $x_n=\frac{1}{2^n}$. It is clear that $f$ is not injective, since each dyadic number is obtained for two sets of indices. Hence the condition $(1)$ from Proposition \ref{rowne} is not satisfied. We will show that the condition $(2)$ is satisfied. Fix two ideals $I\neq J$. Let $A\in J\setminus I$ (if $J\subsetneq I$ we simply take $A\in I\setminus J$). It is clear that $A$ is infinite since it is not an element of ideal $I$ and $A$ is not cofinite since it is an element of ideal $J$. Suppose that there exists $B\in I$ such that $x=\sum_{n\in A}x_n=\sum_{n\in B}x_n$. But it is possible only when $x$ is a dyadic number, so $A$ is finite or cofinite and we get contradiction. Hence $x\in A_{J}(x_n)\setminus A_{I}(x_n)$, so $\sum_{n=1}^{\infty}x_n$ satisfies the condition $(2)$ from Proposition \ref{rowne}.
\end{example}
\begin{example}
Let $(y_n)$ satisfy the inequality $y_n>2\sum_{k=n+1}^{\infty}y_k$ for each $n\in\mathbb{N}$. We define $x_{2n-1}=x_{2n}=y_{n}$ for every $n\in\mathbb{N}$.
Then it is clear that $A_{I}(x_n)=A_{J}(x_n)$ for $I=2\mathbb{N}-1+Fin$ and  $J=2\mathbb{N}+Fin$. Hence the condition $(2)$ from Proposition \ref{rowne} is not satisfied for the series $\sum_{n=1}^{\infty}x_n$. Now let $I\subsetneq J$. Then there exists $A\in J\setminus I$. Since $\vert A\vert=\infty$, we obtain that at least one of the sets $A\cap 2\mathbb{N}-1$ or $A\cap 2\mathbb{N}$ is infinite. Moreover if both $A\cap 2\mathbb{N}-1$ and $A\cap 2\mathbb{N}$ are infinite, then at least one of them is not an element of the ideal $I$.
Assume that $E=A\cap 2\mathbb{N}-1\notin I$.
%and denote it by $E=(k_n)=\{k_1<k_2<\ldots\}$.
Fix $x=\sum_{n\in E}x_n\in A_{J}(x_n)$. Suppose that $A_{J}(x_n)=A_{I}(x_n)$. Hence $x\in A_{I}(x_n)$, that is  $x=\sum_{n\in F}x_n$ for some $F\in I$. %$B=(b_n)\in I$
Note that by the definition of $(x_n)$ we have $x=\sum_{n\in E+1}x_n$, then by the condition $y_n>2\sum_{k=n+1}^{\infty}y_k$, we obtain  that $F\subset E\cup (E+1)$ and for every $2n-1\in E$ either $2n-1\in F$ or $2n\in F$. Moreover $E\setminus F\in J\setminus I$, so $\vert E\setminus F\vert=\infty$. 
Note that $(E\setminus F)+1=(E+1)\cap F\in I$ (in particular if $I$ is shift-invariant, that is $B\in I$ if and only if $B+1\in I$, then we immediately get the contradiction, since $E\setminus F\notin I$ and $(E\setminus F)+1\in I$). Define $G=(E\setminus F)\cup ((E\setminus F)+1)$, then $G\in J\setminus I$. Fix $y=\sum_{n\in G}x_n$. Since $y_n>2\sum_{k=n+1}^{\infty}y_k$, then the equality $y=\sum_{n\in H}x_n$ holds if and only if $H=G$. Hence $y\in A_{J}(x_n)\setminus A_{I}(x_n)$. We proved that the series $\sum_{n=1}^{\infty}x_n$ satisfies the condition $(3)$ from Proposition \ref{rowne}. 

%$b_n=k_n$ or $b_n=k_n+1$ for all $n\in\mathbb{N}$. 
%Since  $\vert E\setminus B\vert =\infty$, we get $\vert B\cap (E+1)\vert =\infty$. 
%Denote $F=B\cap (E+1)$, $F\in I$. Note that $F-1=(B-1)\cap E\in J$. Let $G=(F-1)\cup F$
\end{example}

Intersection of two ideals is also an ideal. The following proposition is connected with such ideal. 
\begin{proposition}\label{przekrojidealow}
Assume that $I$ and $J$ are ideals. Let $\sum_{n=1}^{\infty}x_n$ be a series such that its associated function $f$ is injective. %for every $x\in A(x_n)$ there exists unique $A\subset\mathbb{N}$ for which $x=\sum_{n\in A} x_n$. 
Then $A_{I\cap J}(x_n)=A_{I}(x_n)\cap A_{J}(x_n)$.
\begin{proof}
It is clear that $A_{I\cap J}(x_n)\subset A_{I}(x_n)\cap A_{J}(x_n)$. Let $x\in A_{I}(x_n)\cap A_{J}(x_n)$. Then $x=\sum_{n\in A} x_n$ and $x=\sum_{n\in B} x_n$ for some $A\in I,B\in J$. From the assumption we have $A=B$, so $A\in I\cap J$. Hence $x\in A_{I\cap J}(x_n)$.
\end{proof}
\end{proposition}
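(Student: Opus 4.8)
The plan is to prove the two inclusions separately, with injectivity of $f$ entering only in the nontrivial direction. First I would observe that the inclusion $A_{I\cap J}(x_n)\subset A_I(x_n)\cap A_J(x_n)$ is immediate and requires no hypothesis on $f$: if $A\in I\cap J$, then $A$ belongs to both $I$ and $J$, so the value $\sum_{n\in A}x_n$ lies in both $A_I(x_n)$ and $A_J(x_n)$, hence in their intersection. This is exactly the direction already flagged as clear.

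For the reverse inclusion I would fix a point $x\in A_I(x_n)\cap A_J(x_n)$ and simply unwind the definitions. Membership in $A_I(x_n)$ produces a set $A\in I$ with $x=\sum_{n\in A}x_n$, and membership in $A_J(x_n)$ produces a set $B\in J$ with $x=\sum_{n\in B}x_n$. The key step is then to invoke injectivity of $f$: since $f(\chi_A)=x=f(\chi_B)$, injectivity forces $A=B$. Consequently this common set lies in both $I$ and $J$, that is $A\in I\cap J$, which exhibits $x=\sum_{n\in A}x_n$ as an element of $A_{I\cap J}(x_n)$. Combining the two inclusions gives the asserted equality.

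The argument is essentially a verification, so there is no serious technical obstacle; the only substantive point, and the reason injectivity cannot be dropped, is that a single value $x$ could in general be realized by two genuinely different index sets, one witnessing $x\in A_I(x_n)$ and another witnessing $x\in A_J(x_n)$, with neither of them lying in $I\cap J$. The injectivity of $f$ is precisely the hypothesis that rules this out, by identifying the two witnessing sets and thereby placing their common value into $A_{I\cap J}(x_n)$.
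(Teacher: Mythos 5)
Your proof is correct and follows exactly the paper's argument: the trivial inclusion, then unwinding the definitions for a point of $A_I(x_n)\cap A_J(x_n)$ and using injectivity of $f$ to identify the two witnessing sets, placing the common set in $I\cap J$. Nothing differs in substance from the paper's own proof.
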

We can strengthen Proposition \ref{przekrojidealow} by modifing its assumptions: 
\begin{proposition}\label{rownowaznyprzekroj}
Let $\sum_{n=1}^{\infty}x_n$ be an absolutely convergent series. If the associated function $f$ is injective on $W=\{0,1\}^{\mathbb{N}}\setminus\{\chi_A : \vert A\vert<\infty  \ \text{or} \ \vert \mathbb{N}\setminus A\vert<\infty\}$, then  for every ideals $I,J$ we have $A_{I\cap J}(x_n)=A_{I}(x_n)\cap A_{J}(x_n)$. 
\begin{proof}
Let take two ideals $I,J$ and fix $x\in A_{I}(x_n)\cap A_{J}(x_n)$, that is  $x=\sum_{n\in A} x_n=\sum_{n\in B} x_n$ for $A\in I$ and $B\in J$.  If $A=B$, then  $x\in A_{I\cap J}(x_n)$. Suppose that $A\neq B$. Since $A,B$ cannot be cofinite as elements of ideals, we get $A\in Fin\subset I\cap J$ or $B\in Fin\subset I\cap J$. Hence $x\in A_{I\cap J}(x_n)$.
%\\$(i)\Rightarrow (ii)$. Assume the condition $(i)$ and suppose that $(ii)$ does not hold. Then there exists  $x=\sum_{n\in A} x_n=\sum_{n\in B} x_n$ for $A\neq B$ such that $A,B\notin Fin$ and $\mathbb{N}\setminus A,\mathbb{N}\setminus B\notin Fin$.
\end{proof}
\end{proposition}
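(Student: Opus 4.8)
The plan is to mimic the proof of Proposition~\ref{przekrojidealow}, but to relax the injectivity hypothesis so that it is used only on the set $W$ of infinite co-infinite sets, compensating for this by treating the finite case separately. As there, the inclusion $A_{I\cap J}(x_n)\subset A_{I}(x_n)\cap A_{J}(x_n)$ is immediate from $I\cap J\subset I$ and $I\cap J\subset J$, so all the content lies in the reverse inclusion.

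For the reverse inclusion I would fix $x\in A_{I}(x_n)\cap A_{J}(x_n)$ and choose witnesses $A\in I$ and $B\in J$ with $x=\sum_{n\in A}x_n=\sum_{n\in B}x_n$. If $A=B$, then $A\in I\cap J$ and hence $x\in A_{I\cap J}(x_n)$, so the interesting case is $A\neq B$.

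The key observation is that, since $Fin\subset I$ and $Fin\subset J$ and no ideal contains $\mathbb{N}$, neither $A$ nor $B$ can be cofinite: a cofinite set together with its finite complement (which lies in $Fin$) would generate $\mathbb{N}$, a contradiction. Thus $A$ and $B$ are both co-infinite. Now, were both of them also infinite, they would both belong to $W$; since $f(\chi_A)=x=f(\chi_B)$ and $f$ is injective on $W$, this would force $\chi_A=\chi_B$, i.e. $A=B$, contradicting $A\neq B$. Therefore at least one of $A,B$ is finite, say $A\in Fin$. As $Fin\subset I\cap J$, this gives $A\in I\cap J$, whence $x=\sum_{n\in A}x_n\in A_{I\cap J}(x_n)$, completing the argument.

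The only genuinely delicate point is the realization that membership in an ideal already rules out cofiniteness; this is precisely what converts ``$A\notin W$'' into ``$A$ is finite'' and lets the argument close through $Fin\subset I\cap J$. Once this is in place the proof is a routine case distinction, and the weakening from injectivity on all of $\{0,1\}^{\mathbb{N}}$ to injectivity on $W$ costs nothing because the excluded finite and cofinite sets are handled directly by the ideal axioms.
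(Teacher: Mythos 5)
Your proof is correct and follows essentially the same route as the paper's: split on $A=B$ versus $A\neq B$, rule out cofinite witnesses using the ideal axioms, invoke injectivity on $W$ to force at least one witness to be finite, and conclude via $Fin\subset I\cap J$. You merely spell out two steps the paper leaves implicit (why elements of an ideal cannot be cofinite, and why failing to lie in $W$ then means finite), which is a faithful reconstruction rather than a different argument.
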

\begin{example}
Let $x_n=\frac{1}{2^n}$ for each $n\in\mathbb{N}$. Since $\sum_{n=1}^{\infty}x_n$ satisfies the assumptions of Proposition \ref{rownowaznyprzekroj}, we obtain $A_{I\cap J}(x_n)=A_{I}(x_n)\cap A_{J}(x_n)$ for all ideals $I,J$. 
\end{example}

\section{Open problems}

%\begin{problem}
%Can we reverse the implication in Proposition \ref{przekrojidealow}, namely if for every ideals $I$ and $J$ the equality $A_{I\cap J}(x_n)=A_{I}(x_n)\cap A_{J}(x_n)$ holds, then is it true that the series' $\sum_{n=1}^{\infty}x_n$ associated function $f$ is injective ?
%\end{problem}

In Remark \ref{bezwlasnoscibairea} for a maximal ideal we constructed a sequence for which $A_{I}(x_n)$ does not have the Baire property. In particular it means that $A_{I}(x_n)$ is neither a meager nor a comeager set. Other examples lead us to state the following:
\begin{problem}
Assume that $A_{I}(x_n)$ has the Baire property. Is it true that  $A_{I}(x_n)$ is meager or comeager ?
\end{problem} 

Section 6 was dedicated to some equalities and inclusions connected with ideally supported achievement set. We considered the following conditions: 
\begin{enumerate}
\item  for all ideals $I,J$ we have $A_{I\cap J}(x_n)=A_{I}(x_n)\cap A_{J}(x_n)$; 
\item for every ideals $I\neq J$ we have $A_{I}(x_n)\neq A_{J}(x_n)$;
\item for every ideals $I\subsetneq J$ we have $A_{I}(x_n)\subsetneq A_{J}(x_n)$.
\end{enumerate}
We showed that if the associated function $f$ of the series $\sum_{n=1}^{\infty}x_n$ is incjective, then all three above conditions are satisfied. Moreover we presented examples, which show that the above implication cannot be reversed for all three conditions. 
\begin{problem}
Characterize classes of series, which satisfy the above conditions.
\end{problem} 
%\begin{proposition}
%If $\sum_{n\in A} x_n =\sum_{n\in\mathbb{N}\setminus B} x_n$ for some $A,B\in I$, then $\vert A_{I}(x_n)\cap A_{F_I}(x_n)\vert \geq \vert A\vert$.
%\end{proposition}
%\begin{proof}
%Fix $F\subset A$, then $F\in I$ and $\mathbb{
%\end{proof}

\textbf{Acknowledgment.} I would like to thank my PhD's supervisor prof. Szymon G\l \c{a}b for a very careful analysis and fruitful discussions on this article.


\begin{thebibliography}{a,b,c,d}
\bibitem{BDK} M. Balcerzak, K. Dems, A. Komisarski, Statistical convergence and ideal convergence for sequences of functions, J. Math. Anal. Appl. 328 (2007), no. 1, 715–729.
\bibitem{BGW} M. Balcerzak, S. G\l \c{a}b, A. Wachowicz, Qualitative properties of ideal convergent subsequences and rearrangements, Acta Math. Hungar. 150 (2016), no. 2, 312–323.
\bibitem{BFPW} A. Bartoszewicz, M. Filipczak, F. Prus-Wi\'sniowski, Topological and algebraic aspects of subsums of series, Traditional and present-day topics in real analysis, 345--366, Faculty of Mathematics and Computer Science. University of \L \'od\'z, \L \'od\'z, (2013).
\bibitem{BG} A. Bartoszewicz, S. G\l \c{a}b, Achievement sets on the plane -- perturbations of geometric and multigeometric series, Chaos Solitons Fractals 77 (2015) 84--93.
\bibitem{BGM} A. Bartoszewicz, S. G\l \c{a}b, J. Marchwicki, Achievement sets of conditionally convergent series, arXiv:1604.07575
\bibitem {FMRS} R. Filipów, N. Mrożek, I. Recław, P. Szuca, Ideal version of Ramsey's theorem, Czechoslovak Math. J. 61(136) (2011), no. 2, 289–308.
\bibitem{FS} R. Filipów, P. Szuca, Rearrangement of conditionally convergent series on a small set, J. Math. Anal. Appl. 362 (2010), no. 1, 64--71.
\bibitem{GO} S. G\l \c{a}b, M. Olczyk, Convergence of series on large set of indices, Math. Slovaca 65 (2015), no. 5, 1095–1106.
\bibitem{GN} J.A. Guthrie, J.E. Neymann, The topological structure of the set of subsums of an infinite series, Colloq. Math. 55:2 (1988), 323--327.
\bibitem{HL} K. Haseo, T. Linton, Normal numbers and subsets of N with given densities, Fundamenta Mathematicae 144 (1994).
\bibitem{HS} G. Horbaczewska, A. Skalski, The Banach principle for ideal convergence in the classical and noncommutative context, J. Math. Anal. Appl. 342 (2008), no. 2, 1332–1341.
\bibitem{J} R. Jones, Achievement sets of sequences, Am. Math. Mon. 118:6 (2011), 508--521.
\bibitem{Kakeya} S. Kakeya, On the partial sums of an infinite series, T\^{o}hoku Sic. Rep. 3 (1914), 159--164.
\bibitem{Kechris} A. Kechris, Classical Descriptive Set Theory, Springer New York (1995)
\bibitem{K} P. Klinga, Rearranging series of vectors on a small set. J. Math. Anal. Appl. 424 (2015), no. 2, 966–974.
\bibitem{K} A. Komisarski, Pointwise I-convergence and I-convergence in measure of sequences of functions, J. Math. Anal. Appl. 340 (2008), no. 2, 770–779.
\bibitem{KSW} P. Kostyrko, T. Šalát, W. Wilczyński, I-Convergence, Real Anal. Exchange 26 (2000/2001) 669–689.
\bibitem{LO} A. Leonov, C. Orhan, On filter convergence of series, Real Anal. Exchange 40, Number 2 (2015), 459--474.
\bibitem{Mazur} K. Mazur, $F_\sigma$-ideals and $\omega_1\omega_1^*$-gaps in the Boolean algebras $P(\omega)/I$. Fund. Math. 138 (1991), no. 2, 103--111.
\bibitem {M} N. Mrożek, Ideal version of Egorov's theorem for analytic P-ideals, J. Math. Anal. Appl. 349 (2009), no. 2, 452–458.
\bibitem{NS1} J.E. Nymann, R.A. S\'{a}enz, On the paper of Guthrie and Nymann on subsums of infinite series, Colloq. Math. 83 (2000), 1--4.  
\bibitem{S}  T. Šalát, On statistically convergent sequences of real numbers, Math. Slovaca 30 (1980), no. 2, 139-150.
\bibitem{T} M. Talagrand, On T. Bartoszyński's structure theorem for measurable filters. C. R. Math. Acad. Sci. Paris 351 (2013), no. 7--8, 281--284.
\bibitem{Walters} P. Walters, Ergodic theory—introductory lectures. Lecture Notes in Mathematics, Vol. 458. Springer-Verlag, Berlin-New York, 1975.


\end{thebibliography}
\end{document}